\newcommand{\C}{\mathbb{C}}
\newcommand{\QQ}{\mathbb{Q}}
\newcommand{\NN}{\mathbb{N}}
\newcommand{\PP}{\mathbb{P}}
\newcommand{\OO}{\mathcal O}
\newcommand{\MM}{\mathcal M}
\newcommand{\wt}{\widetilde}
\DeclareMathOperator{\aut}{Aut}
\DeclareMathOperator{\ide}{id}
\newtheorem{theorem}{Theorem}[section]
\newtheorem{claim}[theorem]{Claim}
\newtheorem{lemma}[theorem]{Lemma}
\newtheorem{sublemma}[theorem]{Sublemma}
\newtheorem{corollary}[theorem]{Corollary}
\newtheorem{proposition}[theorem]{Proposition}
\newtheorem{conjecture}[theorem]{Conjecture}
\newtheorem{remark}[theorem]{Remark}
\newtheorem{definition}[theorem]{Definition}
\newtheorem{convention}{Conventions}
\newtheorem{nonumbering}{Theorem}
\newtheorem{nonumberingc}{Corollary}
\newtheorem{nonumberingt}{Acknowledgements}
\begin{document}
\author[Robert Laterveer]
{Robert Laterveer}

\address{Institut de Recherche Math\'ematique Avanc\'ee,
CNRS -- Universit\'e 
de Strasbourg,\
7 Rue Ren\'e Des\-car\-tes, 67084 Strasbourg CEDEX,
FRANCE.}
\email{robert.laterveer@math.unistra.fr}

\title[Algebraic cycles on certain HK fourfolds]{Algebraic cycles on certain hyperk\"ahler fourfolds with an order $3$ non--symplectic automorphism}

\begin{abstract} Let $X$ be a hyperk\"ahler variety, and assume $X$ has a non--symplectic automorphism $\sigma$ of order $>{1\over 2}\dim X$. Bloch's conjecture predicts that the quotient
$X/<\sigma>$ should have trivial Chow group of $0$--cycles. We verify this for Fano varieties of lines on certain special cubic fourfolds
having an order $3$ non--symplectic automorphism.
\end{abstract}

\keywords{Algebraic cycles, Chow groups, motives, Bloch's conjecture, Bloch--Beilinson filtration, hyperk\"ahler varieties, Fano varieties of lines on cubic fourfolds, multiplicative Chow--K\"unneth decomposition, splitting property, finite--dimensional motive}
\subjclass[2010]{Primary 14C15, 14C25, 14C30.}

\maketitle

\section{Introduction}

Let $X$ be a smooth projective variety over $\C$, and let $A^i(X):=CH^i(X)_{\QQ}$ denote the Chow groups of $X$ (i.e. the groups of codimension $i$ algebraic cycles on $X$ with $\QQ$--coefficients, modulo rational equivalence). Let $A^i_{hom}(X)$ denote the subgroup of homologically trivial cycles. It does not seem an exaggeration to say that the field of algebraic cycles is filled with open questions \cite{B}, \cite{J2}, \cite{J4}, \cite{MNP}, \cite{Vo}. Among these open questions, a prominent position is occupied by Bloch's conjecture, proudly and sturdily overtowering the field like an unscalable mountain top.

\begin{conjecture}[Bloch \cite{B}]\label{bloch1} Let $X$ be a smooth projective variety of dimension $n$. Let $\Gamma\in A^n(X\times X)$ be such that
  \[ \Gamma_\ast=0\colon\ \ \ H^i(X,\OO_X)\ \to\ H^i(X,\OO_X)\ \ \ \forall i>0\ .\]
  Then
  \[ \Gamma_\ast=0\colon\ \ \ A^n_{hom}(X)\ \to\ A^n(X)\ .\]
  \end{conjecture}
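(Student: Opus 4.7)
The plan is to attack Conjecture~\ref{bloch1} via the conjectural framework of a Bloch--Beilinson filtration together with finite--dimensionality of motives in the sense of Kimura--O'Sullivan. I emphasize at the outset that the statement is wide open in the asserted generality; what follows is therefore a strategy and a reduction, not a self--contained argument.

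The first step is to equip $X$ with a self--dual Chow--K\"unneth decomposition $\{\pi_i\}_{i=0}^{2n}$ of the diagonal and to promote it to a descending filtration $F^\bullet$ on $A^\ast(X)$ satisfying the Bloch--Beilinson axioms: $F^1 A^n(X)=A^n_{hom}(X)$, $F^{n+1}A^n(X)=0$, and $\gr^j_F A^n(X)$ depends only on the motive $(X,\pi_{2n-j})$, with the action on each graded piece factoring through transcendental cohomology. Granting such a filtration, the hypothesis that $\Gamma_\ast$ kills $H^i(X,\OO_X)$ for all $i>0$ translates, via Hodge theory and complex conjugation, into the vanishing of $\Gamma_\ast$ on the full transcendental part of $H^\ast(X,\C)$. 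The Bloch--Beilinson compatibility then forces $\Gamma_\ast=0$ on every graded piece $\gr^j_F A^n(X)$, and a descending induction on $j$ yields $\Gamma_\ast=0$ on $F^1 A^n(X)=A^n_{hom}(X)$, as required.

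In the absence of an abstract Bloch--Beilinson filtration, one would replace step one by a more concrete substitute: assume $h(X)$ finite--dimensional, refine each $\pi_i$ into algebraic and transcendental projectors $\pi_i^{\mathrm{alg}}+\pi_i^{\mathrm{tr}}$, and decompose $\Gamma$ along the resulting bigrading of $A^n(X\times X)$. The cohomological hypothesis kills every component landing on a transcendental factor, while the surviving components factor through algebraic cohomology and can be treated by Lieberman--type arguments; finite--dimensionality (via the Kimura nilpotence theorem) is what allows one to lift the numerical vanishing of the latter to rational equivalence.

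The decisive obstacle, and the reason the conjecture remains open, is that both ingredients are themselves conjectural and of essentially the same depth as Bloch's conjecture: no Bloch--Beilinson filtration is known beyond curves, surfaces and a handful of special classes, and finite--dimensionality of $h(X)$ is likewise open in general. A realistic proposal therefore amounts to a \emph{reduction}: prove the conjecture under the additional hypotheses that $h(X)$ is finite--dimensional and admits an explicit (multiplicative) Chow--K\"unneth decomposition compatible with the correspondences $\Gamma$ of interest. Even then, verifying the compatibility of $\Gamma$ with the refined projectors is the main technical hurdle, and it is precisely this reduction which is exploited in the body of the present paper in the hyperk\"ahler fourfold setting.
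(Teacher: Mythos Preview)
The statement is labelled a \emph{conjecture} in the paper, and the paper does not attempt to prove it in general; it only establishes the very special instance recorded as theorem~\ref{main}. You have recognised this correctly: your text is explicitly a strategy and a reduction rather than a proof, and as such there is no ``gap'' to point to beyond the ones you yourself flag (existence of a Bloch--Beilinson filtration, finite--dimensionality of $h(X)$).

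Your outline is accurate and in fact anticipates the paper's method in the special case. The proof of theorem~\ref{main} follows precisely the second route you sketch: one first shows $X$ has finite--dimensional motive (proposition~\ref{findim}), then uses the Shen--Vial CK decomposition $\{\Pi^X_i\}$ as a concrete surrogate for the Bloch--Beilinson filtration, verifies that the correspondence $\Delta_G=\tfrac{1}{3}\sum_j\Gamma_{\sigma^j}$ is compatible with the relevant projectors (corollary~\ref{commut}), and finally invokes Kimura nilpotence to lift the cohomological vanishing to Chow. The ``main technical hurdle'' you name---checking compatibility of $\Gamma$ with the refined projectors---is exactly where most of the work goes (propositions~\ref{compat} and~\ref{44}); in particular the treatment of the piece $A^4_{(4)}(X)$ requires an ad hoc argument with $\hbox{Sym}^2 H^2(X)$ rather than a clean appeal to a transcendental projector (cf.\ remark~\ref{refined}). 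So your proposal is not a proof of conjecture~\ref{bloch1}, but it is a faithful description of the only known line of attack, and it matches the paper's implementation in the case at hand.
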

  
A particular case of conjecture \ref{bloch1} is the following:
 
  \begin{conjecture}[Bloch \cite{B}]\label{bloch2} Let $X$ be a smooth projective variety of dimension $n$. Assume that
  \[  H^i(X,\OO_X)=0\ \ \ \forall i>0\ .\]
  Then
  \[  A^n_{}(X)\cong\QQ\ .\]
  \end{conjecture}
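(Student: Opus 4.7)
The statement is a special case of Conjecture \ref{bloch1}: specializing to $\Gamma=\Delta_X$, the induced map $(\Delta_X)_\ast$ acts as the identity on each $H^i(X,\OO_X)$, and these spaces are zero for $i>0$ by hypothesis, so the hypothesis of \ref{bloch1} is met. Its conclusion then forces $(\Delta_X)_\ast=0$ on $A^n_{hom}(X)$; but $(\Delta_X)_\ast$ is the identity, so $A^n_{hom}(X)=0$, and combined with the degree map one gets $A^n(X)\cong\QQ$. Thus any genuine approach to \ref{bloch2} must, in effect, produce a correspondence-level vanishing for the diagonal.

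The plan is to attack this via finite-dimensional motives in the sense of Kimura-O'Sullivan. First I would upgrade the analytic hypothesis: by Hodge symmetry, $H^i(X,\OO_X)=0$ for $i>0$ is equivalent to $h^{p,0}(X)=0$ for $p>0$, so $X$ carries no non-trivial holomorphic forms, and in particular $\alb(X)=0$. Granting a Chow-K\"unneth decomposition $\mathfrak{h}(X)=\bigoplus_i \mathfrak{h}^i(X)$, one isolates a transcendental summand $\mathfrak{h}^{tr}(X)$ whose Betti realization has trivial $(p,0)$ and $(0,p)$ Hodge components for $p>0$. Assuming further that $\mathfrak{h}(X)$ is Kimura finite-dimensional, Kimura's nilpotency theorem promotes the Hodge-theoretic vanishing of the realization to the vanishing of $\mathfrak{h}^{tr}(X)$ as a Chow motive, which in turn gives $A^n_{hom}(X)=0$.

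The genuine obstacle is Kimura finite-dimensionality of $\mathfrak{h}(X)$ itself, which is conjectural in general and known only in restricted geometric settings (varieties dominated by products of curves, abelian varieties and close relatives, certain surfaces and hyperk\"ahlers). Without finite-dimensionality, Bloch-Srinivas decomposition-of-the-diagonal arguments can be deployed, but they typically require as input the very $0$-cycle vanishing one is trying to establish, so they cannot by themselves close the loop. In the context of the present paper this obstruction is sidestepped by exploiting the concrete geometry of Fano varieties of lines on cubic fourfolds: their motives fit into frameworks (multiplicative Chow-K\"unneth decompositions, relations with the motive of the ambient cubic) where enough motivic control is available to make Bloch's conjecture tractable in the special cases considered.
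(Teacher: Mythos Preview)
Your first paragraph is exactly what the paper does: the statement is a \emph{conjecture}, not a theorem, and the paper's only comment is that the absolute version (Conjecture~\ref{bloch2}) is obtained from the relative version (Conjecture~\ref{bloch1}) by taking $\Gamma=\Delta_X$. You have reproduced precisely this reduction, including the conclusion $A^n_{hom}(X)=0$ and hence $A^n(X)\cong\QQ$ via the degree map.

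Your remaining two paragraphs are not a proof but a (correct) conditional strategy: assuming a CK decomposition and Kimura finite--dimensionality, nilpotence would kill the transcendental piece and force $A^n_{hom}(X)=0$. This is more than the paper says at this juncture, but it is an accurate sketch of the general philosophy and you are honest that finite--dimensionality is the genuine obstruction. Just be aware that you are not proving Conjecture~\ref{bloch2}; you are explaining why it would follow from other standard conjectures, which is all one can do here.
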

  
 The ``absolute version'' (conjecture \ref{bloch2}) is obtained from the ``relative version'' (conjecture \ref{bloch1}) by taking $\Gamma$ to be the diagonal. Conjecture \ref{bloch2} is famously open for surfaces of general type (cf. \cite{PW}, \cite{V8}, \cite{Gul} for some recent progress).

Let us now suppose that $X$ is a hyperk\"ahler variety (i.e., a projective irreducible holomorphic symplectic manifold \cite{Beau0}, \cite{Beau1}), say of dimension $2m$. Suppose there exists a non--symplectic automorphism $\sigma\in\aut(X)$ of order $k>m$. This implies that
  \[    \bigl( \sigma+\sigma^2 + \ldots +\sigma^k\bigr){}_\ast=0\colon\ \ \ H^i(X,\OO_X)\ \to\ H^{i}(X,\OO_X)\ \ \ \forall i>0\ .\]
  
 Conjecture \ref{bloch1} (applied to the correspondence $\Gamma=\sum_{j=1}^k \Gamma_{\sigma^j}\in A^{2m}(X\times X)$, where $\Gamma_f$ denotes the graph of an automorphism $f\in\aut(X)$) then predicts the following:

\begin{conjecture}\label{conjhk} Let $X$ be a hyperk\"ahler variety of dimension $2m$. Let $\sigma\in\aut(X)$ be an order $k$ non--symplectic automorphism, and assume $k>m$. Then
  \[  \bigl( \sigma+\sigma^2 + \ldots +\sigma^k\bigr){}_\ast=0\colon\ \ \ A^{2m}_{hom}(X)\ \to\ A^{2m}(X)\ .\]
 \end{conjecture}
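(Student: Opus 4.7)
The displayed statement is an open conjecture; a realistic goal --- and the one promised by the abstract --- is to verify it when $X=F(Y)$ is the Fano variety of lines on a smooth cubic fourfold $Y\subset\PP^5$ carrying a non--symplectic automorphism $\sigma_Y$ of order $3$, with $\sigma\in\aut(X)$ the induced automorphism. Thus $m=2$, $k=3$, and the claim is that the correspondence $\Delta_X+\Gamma_\sigma+\Gamma_{\sigma^2}\in A^4(X\times X)$ acts as zero on $A^4_{hom}(X)$.

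The plan is to exploit the multiplicative Chow--K\"unneth decomposition of $F(Y)$ produced by Shen and Vial, together with the associated bigrading $A^4(X)=A^4_{(0)}(X)\oplus A^4_{(2)}(X)\oplus A^4_{(4)}(X)$. Here $A^4_{(0)}$ is spanned by the Beauville--Voisin class and $A^4_{(2)}$ is contained in the image of the intersection product from divisors. Since $\sigma^\ast$ acts on the symplectic form by a primitive cube root $\zeta$ of unity, and with eigenvalues in $\{1,\zeta,\zeta^2\}$ on $\Pic(X)$, a direct computation on the induced action on $A^4_{(2)}$ shows that $\ide+\sigma_\ast+\sigma_\ast^2$ already vanishes on $A^4_{(0)}\oplus A^4_{(2)}$ by the identity $1+\zeta+\zeta^2=0$. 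The content therefore reduces to establishing the same vanishing on the transcendental piece $A^4_{(4)}(X)$.

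To handle $A^4_{(4)}(X)$ I would transfer the problem from $F(Y)$ to $Y$ via the incidence correspondence $P\subset F(Y)\times Y$: Shen--Vial's analysis identifies (up to Tate twist) the transcendental submotive of $F(Y)$ with the transcendental submotive of $Y$, equivariantly for the order--$3$ actions. The question thus becomes whether $\ide+\sigma_{Y\ast}+\sigma_{Y\ast}^2$ vanishes on the transcendental part of $A^3_{hom}(Y)$. Cohomologically this is immediate from the eigenvalues of $\sigma_Y^\ast$ on $H^4_{prim}(Y,\QQ)$, so the issue is purely to lift the cohomological vanishing to rational equivalence.

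This lifting is the principal obstacle. My approach is to combine finite--dimensionality of the motive of $Y$ in the sense of Kimura--O'Sullivan --- which for the specific cubic fourfolds at hand should follow by exhibiting them as quotients or covers of varieties with already known finite--dimensional motive --- with a Voisin--style spread argument over the moduli of cubic fourfolds admitting the prescribed order--$3$ symmetry (as classified by Boissi\`ere--Camere--Sarti and Fu). If the transcendental motive of $Y$ can be shown to be carried by an auxiliary surface $S$ compatibly with $\sigma_Y$, the problem collapses to Bloch's conjecture for a non--symplectic automorphism of $(S,\sigma_S)$, which is tractable in the finite--dimensional framework. The delicate task will be to manage the Bloch--Beilinson filtration carefully enough that the cohomological relation lifts to rational equivalence without unwanted contributions from higher--depth pieces of the filtration.
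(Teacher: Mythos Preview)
Your overall framing --- reduce to the Shen--Vial bigrading $A^4_{hom}(X)=A^4_{(2)}(X)\oplus A^4_{(4)}(X)$ and treat each piece --- matches the paper. But several of the individual steps are not right.

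\textbf{The piece $A^4_{(2)}$.} Your description ``$A^4_{(2)}$ is contained in the image of the intersection product from divisors'' is false: products of divisors land in $A^4_{(0)}$, whereas $A^4_{(2)}(X)$ is built from $A^2_{(2)}(X)\subset A^2_{hom}(X)$ (e.g.\ via multiplication by the polarization). Consequently an eigenvalue bookkeeping on $\Pic(X)$ tells you nothing about $A^4_{(2)}$. (Relatedly, $\ide+\sigma_\ast+\sigma_\ast^2$ does \emph{not} vanish on $A^4_{(0)}$: the Beauville--Voisin class is $\sigma$--invariant, so the sum acts by $3$. This is harmless for the conjecture, but it shows the eigenvalue heuristic is misleading.) The paper instead proves the cohomological identity $\Delta_G\circ\Pi_2^X=\gamma$ with $\gamma$ supported on $D\times D$, and then uses finite--dimensionality of $h(X)$ and the nilpotence theorem to lift this to Chow level; a crucial auxiliary input is that $\sigma_\ast$ \emph{preserves} each $A^i_{(j)}$ (proved via the Voisin rational self--map $\phi$ and its eigenspace description of $A^\ast_{(\ast)}$), so that $(\Delta_G\circ\Pi_2^X)^{\circ N}$ and $\Delta_G\circ\Pi_2^X$ have the same action. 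You have not isolated this compatibility step, and without it the nilpotence argument does not close.

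\textbf{The piece $A^4_{(4)}$.} Your proposed reduction to ``the transcendental part of $A^3_{hom}(Y)$'' is the wrong target. The incidence correspondence identifies $h^2_{tr}(X)$ with $h^4_{tr}(Y)(1)$; but $A^4_{(4)}(X)$ is governed by $\Pi_4^X$, whose cohomological support is $H^4(X)=\operatorname{Sym}^2 H^2(X)$, hence by the \emph{symmetric square} of the transcendental piece, not by $h^4_{tr}(Y)$ itself. Transferring to $Y$ would land you in a question about $Y\times Y$ (or $\operatorname{Sym}^2 Y$), not about $A^3_{hom}(Y)$. The paper handles $A^4_{(4)}$ directly on $X$: using $H^4(X)=\operatorname{Sym}^2 H^2(X)$ it splits $\Pi_4^X$ (modulo homological equivalence) into a ``purely transcendental'' piece ${}^t\Pi^X_{4,0,0}$ and pieces supported on $D\times X$; a short Hodge--theoretic computation (the key identity $\sigma^\ast c\cup(\sigma^2)^\ast c=c\cup c$ for $c\in H^2_{tr}$) gives $\Delta_G\circ{}^t\Pi^X_{4,0,0}=0$ in cohomology, and then finite--dimensionality plus the compatibility $\sigma_\ast A^i_{(j)}\subset A^i_{(j)}$ again lift this to Chow.

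\textbf{Finite--dimensionality.} You are right that this is essential, and the paper proves it for the specific cubics $Y=\{f(X_0,\ldots,X_3)+g(X_4,X_5)=0\}$ via the Shioda--Katsura inductive structure (a dominant rational map from a product of a cubic threefold and a cubic curve), then passes to $X=F(Y)$. Your ``Voisin--style spread'' and ``auxiliary surface $S$'' are not needed and would not substitute for the missing steps above.
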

  
 The main result of this note is that conjecture \ref{conjhk} is true for a certain family of hyperk\"ahler fourfolds:
  
  \begin{nonumbering}[=theorem \ref{main}] Let $Y\subset\PP^5(\C)$ be a smooth cubic fourfold defined by an equation
    \[ f(X_0,X_1,X_2,X_3)+ g(X_4,X_5)=0\ ,\]
    where $f$ and $g$ are homogeneous polynomials of degree $3$. Let $X=F(Y)$ be the Fano variety of lines in $Y$. 
    Let $\sigma\in\aut(X)$ be the order $3$ automorphism induced by
    \[  \begin{split} \PP^5(\C)\ &\to\ \PP^5(\C)\ ,\\
                         [X_0:\ldots:X_5]\ &\mapsto\ [X_0:X_1:X_2:X_3:\nu X_4:\nu X_5]\\
                         \end{split}\]
    (where $\nu$ is a primitive $3$rd root of unity). 
    
    Then
    \[ (\ide +\sigma+\sigma^2)_\ast \ A^4_{hom}(X)=0\ .\]
  \end{nonumbering}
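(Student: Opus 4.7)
The strategy I would follow is to separate the (essentially automatic) cohomological vanishing from the (substantive) lift to Chow groups.

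\emph{Cohomology.} Since $\sigma$ is non-symplectic of order $3$, it acts on $H^{2,0}(X)=\C\eta$ by a primitive cube root of unity (in fact by $\nu^2$, coming from the twist $\det\sigma=\nu^2$ in the Griffiths residue isomorphism) and on $H^{4,0}(X)=\C\,\eta\wedge\eta$ by its square; hence $(\ide+\sigma+\sigma^2)^*=0$ on $H^{i,0}(X)$ for $i>0$. This vanishing extends to the transcendental part of $H^4(Y)$ (and hence to the transcendental motive of $X$) via the Jacobian ring decomposition $J(F)=J(f)\otimes J(g)$: the only $\sigma$-fixed isotypic component of $J(F)_3$, on which $\ide+\sigma+\sigma^2$ acts as $3$, turns out to be algebraic, arising from the fixed-locus geometry below.

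\emph{Fixed locus and finite-dimensional motive.} The fixed locus $X^\sigma$ splits as the $27$ lines of the smooth cubic surface $S_f=\{f=0\}\cap\{X_4=X_5=0\}$ (appearing as $27$ isolated points) together with three surfaces $\Sigma_1,\Sigma_2,\Sigma_3$, each isomorphic to $S_f$ and parametrising the lines joining a point of $S_f$ to one of the three fixed points $q_1,q_2,q_3\in\{g=0\}$. Since $S_f$ is rational, each $\Sigma_i$ is a constant cycle surface in $X$, furnishing a large supply of $\sigma$-invariant rational equivalences among $0$-cycles. On the motivic side, I would establish Kimura finite-dimensionality of $h(X)$ by decomposing $h(Y)$ into $\sigma$-isotypic summands whose constituents are controlled by $\PP^3$ and by the rational surface $S_f$ (both finite-dimensional), and then applying the Shen--Vial / Galkin--Shinder realisation of $h(F(Y))$ in terms of $h(Y)$ and $h(Y^{[2]})$.

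\emph{Lifting via MCK and the main obstacle.} Shen--Vial's multiplicative Chow--K\"unneth decomposition for $X$ yields $A^4(X)=A^4_{(0)}\oplus A^4_{(2)}\oplus A^4_{(4)}$ with $A^4_{(0)}=\QQ[o_X]$ and $A^4_{hom}(X)=A^4_{(2)}\oplus A^4_{(4)}$, and $\sigma$ preserves this bigrading. On $A^4_{(4)}(X)$, which is controlled by the transcendental $H^4(X)$, the vanishing follows from the cohomological input combined with Kimura nilpotence. The \emph{main obstacle} is $A^4_{(2)}(X)$: by multiplicativity of the MCK this is the image of $A^2_{(0)}\cdot A^2_{(2)}$, so one must show $(\ide+\sigma+\sigma^2)_*=0$ on the codimension-$2$ group $A^2_{(2)}(X)$ --- a Bloch-conjecture style statement one dimension down. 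I would attack this by a $\sigma$-equivariant decomposition of the diagonal on $X\times X$, using the surfaces $\Sigma_i$ to absorb the non-$\sigma$-invariant part of the diagonal modulo rational equivalence and then invoking finite-dimensionality to conclude. This codimension-$2$ step is where the genuine technical difficulty of the proof should lie.
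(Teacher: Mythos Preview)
You have the location of the difficulty inverted. The piece $A^2_{(2)}(X)$ (and hence $A^4_{(2)}$, via the transpose) is the \emph{easy} step: since $\sigma$ is non-symplectic, $(\Delta_G)_\ast H^{2,0}(X)=0$, and by the Lefschetz $(1,1)$ theorem the $\sigma$-invariant part of $H^2(X)$ lies entirely in $NS(X)_\QQ$. Thus $\Delta_G\circ\Pi_2^X$ is homologically equal to a cycle supported on $D\times D$ for some divisor $D$; such a cycle acts trivially on $A^2_{AJ}$ for dimension reasons, and a single application of nilpotence finishes. No fixed-locus geometry or constant-cycle surfaces are needed here.

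The genuine obstacle is $A^4_{(4)}(X)$, which you dismiss in one line. The projector $\Pi_4^X$ cuts out all of $H^4(X)$, and $(\Delta_G)_\ast$ is \emph{not} zero there: the $\sigma$-invariant part of $H^4(X)$ contains many algebraic $(2,2)$-classes, so $\Delta_G\circ\Pi_4^X$ is far from homologically trivial and nilpotence does not apply directly. Your phrase ``controlled by the transcendental $H^4(X)$'' presupposes a refined projector onto $H^4_{tr}$ that still acts as the identity on $A^4_{(4)}$, but constructing such a projector would require coniveau statements that are not available (this is exactly the difficulty the paper flags with Vial's $\pi^X_{4,0}$). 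Your Jacobian-ring argument concerns $H^4_{prim}(Y)\cong H^2_{prim}(X)$, not $H^4(X)$, and so does not fill this gap. The paper instead exploits $H^4(X)=\operatorname{Sym}^2 H^2(X)$ to factor $\Pi_4^X$ through $\pi^X_{2,0}\times\pi^X_{2,0}$; the crucial fact, not implied by vanishing on $H^{4,0}$ alone, is that $(\Delta_G)_\ast$ kills all of $\operatorname{Sym}^2 H^2_{tr}(X)$. This rests on the Hodge-theoretic claim that $\sigma^\ast(c)\cup(\sigma^2)^\ast(c)=c\cup c$ for every $c\in H^2_{tr}(X)$, proved by observing that the locus where this identity holds is a sub-Hodge structure of $H^2(X)$ containing the symplectic form. (A secondary gap: the assertion that $\sigma$ preserves the Fourier bigrading is not automatic; the paper establishes it, away from $(i,j)=(2,0)$, by showing $\sigma$ commutes with the Voisin rational self-map $\phi$ and hence with its eigenspace decomposition.)
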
 
  
  As an immediate consequence of theorem \ref{main}, we find that Bloch's conjecture \ref{bloch2} is verified for the quotient:
  
  \begin{nonumberingc}[=corollary \ref{triv}] Let $X$ and $\sigma$ be as in theorem \ref{main}, and let $Z:=X/<\sigma>$ be the quotient. Then
    \[  A^4(Z)\cong\QQ\ .\]
    \end{nonumberingc}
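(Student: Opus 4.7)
The plan is to derive the corollary as a formal consequence of theorem \ref{main}, using only the standard description of rational Chow groups of a quotient by a finite group action. I would proceed in three short steps.

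First, let $\pi\colon X\to Z=X/\langle\sigma\rangle$ be the quotient map. Since $\sigma$ has finite order and we work with rational coefficients, a well-known result (e.g. Fulton, \emph{Intersection Theory}, Example 1.7.6) gives an isomorphism
\[ \pi^\ast\colon\ A^4(Z)\ \xrightarrow{\ \sim\ }\ A^4(X)^{\sigma}\ , \]
where $A^4(X)^\sigma$ denotes the subspace of cycles invariant under $\sigma_\ast$. Accordingly, the problem reduces to showing $A^4(X)^\sigma\cong\QQ$.

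Second, I would analyze the $\sigma$-invariant part using the theorem. For any $a\in A^4_{hom}(X)^{\sigma}$ one has $\sigma_\ast a=\sigma^2_\ast a=a$, so
\[ (\ide+\sigma+\sigma^2)_\ast\,a\ =\ 3\,a\ . \]
By theorem \ref{main}, the left-hand side vanishes, and since we are working with $\QQ$-coefficients this forces $a=0$. Hence $A^4_{hom}(X)^\sigma=0$.

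Third, I would combine this with the degree map. The short exact sequence
\[ 0\ \to\ A^4_{hom}(X)\ \to\ A^4(X)\ \xrightarrow{\,\deg\,}\ \QQ\ \to\ 0 \]
is $\sigma$-equivariant (with $\sigma$ acting trivially on $\QQ$), and the averaging operator $\tfrac13(\ide+\sigma+\sigma^2)_\ast$ provides a $\QQ$-linear splitting of the degree map onto the invariants; therefore $\deg\colon A^4(X)^\sigma\to\QQ$ is surjective. Combined with the vanishing of the invariant homologically trivial part, we obtain $A^4(X)^\sigma\cong\QQ$, and via the isomorphism of the first step, $A^4(Z)\cong\QQ$.

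There is no serious obstacle here beyond invoking theorem \ref{main}; the argument is essentially formal, and the only mild subtlety is making sure the standard identification of $A^\ast(Z)_{\QQ}$ with the $\sigma$-invariants applies (which it does, since $X$ is smooth projective and $\langle\sigma\rangle$ is finite).
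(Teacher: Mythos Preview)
Your argument is correct. Both you and the paper start from the identification $A^4(Z)\cong A^4(X)^\sigma$ and then apply theorem \ref{main}, so the overall strategy is the same; the difference lies in how the invariant part is pinned down. The paper works with the Fourier decomposition $A^4(X)=\bigoplus_j A^4_{(j)}(X)$: it invokes proposition \ref{compat} to ensure $\sigma$ preserves each piece, uses the vanishing on $A^4_{(2)}$ and $A^4_{(4)}$ to force $A^4(X)^\sigma\subset A^4_{(0)}(X)$, and then exhibits a $\sigma$--invariant ample divisor $L$ with $L^4$ generating $A^4_{(0)}(X)\cong\QQ$. Your route is more elementary: you use only the ``in particular'' conclusion $(\ide+\sigma+\sigma^2)_\ast A^4_{hom}(X)=0$ together with the degree exact sequence, which is automatically $\sigma$--equivariant, so neither proposition \ref{compat} nor the ample--divisor argument is needed. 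The paper's version yields the slightly sharper identification $A^4(X)^\sigma=A^4_{(0)}(X)$, while yours gets to the stated corollary with less machinery.
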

    
    Another consequence (corollary \ref{ghc}) is that a certain instance of the generalized Hodge conjecture is verified.
   
       The proof of theorem \ref{main} relies on the theory of finite--dimensional motives \cite{Kim}, combined with the Fourier decomposition of the Chow ring of $X$ constructed by Shen--Vial \cite{SV}.

 \vskip0.6cm

\begin{convention} In this article, the word {\sl variety\/} will refer to a reduced irreducible scheme of finite type over $\C$. A {\sl subvariety\/} is a (possibly reducible) reduced subscheme which is equidimensional. 

{\bf All Chow groups will be with rational coefficients}: we will denote by $A_j(X)$ the Chow group of $j$--dimensional cycles on $X$ with $\QQ$--coefficients; for $X$ smooth of dimension $n$ the notations $A_j(X)$ and $A^{n-j}(X)$ are used interchangeably. 

The notations $A^j_{hom}(X)$, $A^j_{AJ}(X)$ will be used to indicate the subgroups of homologically trivial, resp. Abel--Jacobi trivial cycles.
For a morphism $f\colon X\to Y$, we will write $\Gamma_f\in A_\ast(X\times Y)$ for the graph of $f$.
The contravariant category of Chow motives (i.e., pure motives with respect to rational equivalence as in \cite{Sc}, \cite{MNP}) will be denoted $\MM_{\rm rat}$.



We will write $H^j(X)$ 
to indicate singular cohomology $H^j(X,\QQ)$.

\end{convention}

\section{Preliminaries}

\subsection{Quotient varieties}
\label{ssquot}

\begin{definition} A {\em projective quotient variety\/} is a variety
  \[ Z=X/G\ ,\]
  where $X$ is a smooth projective variety and $G\subset\hbox{Aut}(X)$ is a finite group.
  \end{definition}
  
 \begin{proposition}[Fulton \cite{F}]\label{quot} Let $Z$ be a projective quotient variety of dimension $n$. Let $A^\ast(Z)$ denote the operational Chow cohomology ring. The natural map
   \[ A^i(Z)\ \to\ A_{n-i}(Z) \]
   is an isomorphism for all $i$.
   \end{proposition}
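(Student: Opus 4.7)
The plan is to descend to the quotient using the finite map $\pi\colon X\to Z$, exploiting that $\QQ$--coefficients allow averaging by $G$. The guiding principle is that $Z$ should behave like a ``rational manifold'': all Chow groups of $Z$ ought to be read off from the $G$--invariant part of the corresponding Chow groups on the smooth variety $X$, where operational and homological theories coincide by Poincar\'e duality.

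First I would establish two parallel identifications. On the homological side, since $\pi$ is finite and surjective, the formulas $\pi_\ast \pi^\ast = |G|\cdot\ide$ on $A_\ast(Z)$ and $\pi^\ast \pi_\ast = \sum_{g\in G} g_\ast$ on $A_\ast(X)$ show that $\pi^\ast$ induces an isomorphism
\[ A_{n-i}(Z)\ \xrightarrow{\ \sim\ }\ A_{n-i}(X)^G\ ,\]
with inverse $\tfrac{1}{|G|}\pi_\ast$. On the operational side, I would show analogously that the pullback
\[ \pi^\ast\colon\ A^i(Z)\ \to\ A^i(X)^G \]
is an isomorphism, by constructing a transfer: given a $G$--invariant operational class $c$ on $X$, define a class on $Z$ whose action on a cycle $\alpha$ over $T\to Z$ is $\tfrac{1}{|G|}$ times the proper pushforward along $T\times_Z X\to T$ of $c\cap (\text{pullback of }\alpha)$. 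One then checks the operational axioms (compatibility with flat pullback, proper pushforward, and refined Gysin maps) and verifies that this transfer is inverse to $\pi^\ast$ modulo the factor $|G|$.

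With both identifications in hand, the proposition follows formally. Since $X$ is smooth of dimension $n$, the Poincar\'e duality isomorphism $A^i(X)\xrightarrow{\sim}A_{n-i}(X)$ is $G$--equivariant (the $G$--action preserves the fundamental class of $X$), so passing to invariants yields
\[ A^i(Z)\ \cong\ A^i(X)^G\ \cong\ A_{n-i}(X)^G\ \cong\ A_{n-i}(Z)\ ,\]
and a diagram chase identifies this composition with the natural map induced by cap product with $[Z]$.

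The main obstacle is the operational statement, specifically the surjectivity of $\pi^\ast$ onto $A^i(X)^G$. Unlike the homological case, where $\pi_\ast$ is manifestly defined on cycles, descending an invariant operational class requires constructing a bivariant class on $Z$ from $G$--equivariant data on $X$ and verifying the compatibilities with arbitrary base change $T\to Z$. Once this descent is carried out carefully, the remaining steps are formal manipulation with projectors in the category of $\QQ$--vector spaces.
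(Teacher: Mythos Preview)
Your proposal is correct and is essentially the content of Fulton's Example 17.4.10, which is precisely what the paper cites without further argument. So the paper gives no proof of its own---it simply refers to Fulton---whereas you have sketched the actual mechanism behind that reference: identifying both $A^i(Z)$ and $A_{n-i}(Z)$ with the $G$--invariants on $X$ via transfer/averaging, and then invoking Poincar\'e duality on the smooth $X$. Your identification of the operational descent as the delicate point is accurate; Fulton handles this by working with the bivariant formalism and the projection formula for the finite map $\pi$, much as you outline.
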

   
   \begin{proof} This is \cite[Example 17.4.10]{F}.
      \end{proof}

\begin{remark} It follows from proposition \ref{quot} that the formalism of correspondences goes through unchanged for projective quotient varieties (this is also noted in \cite[Example 16.1.13]{F}). We may thus consider motives $(Z,p,0)\in\MM_{\rm rat}$, where $Z$ is a projective quotient variety and $p\in A^n(Z\times Z)$ is a projector. For a projective quotient variety $Z=X/G$, one readily proves (using Manin's identity principle) that there is an isomorphism of motives
  \[  h(Z)\cong h(X)^G:=(X,\Delta_G,0)\ \ \ \hbox{in}\ \MM_{\rm rat}\ ,\]
  where $\Delta_G$ denotes the idempotent ${1\over \vert G\vert}{\sum_{g\in G}}\Gamma_g$.  
  \end{remark}

\subsection{Finite--dimensional motives}

We refer to \cite[Definition 3.7]{Kim} for the definition of finite--dimensional motive (cf. also \cite{An}, \cite{Iv}, \cite{J4}, \cite[Chapters 4 and 5]{MNP} for further context and applications). 
The following two results provide a lot of examples:

\begin{theorem}[Kimura \cite{Kim}]\label{domcurves} Let $X$ be a smooth projective variety, and assume $X$ is dominated by a product of curves. Then $X$ has finite--dimensional motive.
\end{theorem}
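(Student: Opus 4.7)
The plan is to carry out Kimura's three-step strategy: establish finite-dimensionality for smooth projective curves, extend it to products of curves by tensor-product closure, and descend it to dominated varieties by an explicit correspondence construction.

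For step one, I use the standard Chow--K\"unneth decomposition
\[ h(C) = \mathbf{1} \oplus h^1(C) \oplus \mathbb{L} \]
associated to a chosen point $p \in C$, with projectors $\pi_0 = [p] \times C$, $\pi_2 = C \times [p]$, $\pi_1 = \Delta_C - \pi_0 - \pi_2$. The Tate summands $\mathbf{1}$ and $\mathbb{L}$ are evenly finite-dimensional since any one-dimensional even motive $M$ satisfies $\wedge^2 M = 0$. The crucial point is that $h^1(C)$ is oddly finite-dimensional. The key input is the canonical identification $h^1(C) \cong h^1(J(C))$ via the Albanese morphism, combined with the Shermenev/Deninger--Murre motivic decomposition of the Jacobian $J = J(C)$ of dimension $g$: in Kimura's super-graded convention (in which $h^1$ has odd parity) one has $h^i(J) \cong \mathrm{Sym}^i h^1(J)$ as objects of $\MM_{\rm rat}$. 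Since $h^i(J)$ vanishes for $i > 2g$, this forces $\mathrm{Sym}^{2g+1} h^1(C) = 0$, establishing odd finite-dimensionality.

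For step two, I invoke the formal closure of finite-dimensional motives in $\MM_{\rm rat}$ under direct sums, direct summands, and tensor products. Closure under direct sums and summands is tautological. Closure under tensor products uses the binomial-style decomposition of $\wedge^N(A \oplus B)$ and $\mathrm{Sym}^N(A \oplus B)$ into Schur pieces $\wedge^i A \otimes \wedge^{N-i} B$ and $\mathrm{Sym}^i A \otimes \mathrm{Sym}^{N-i} B$ respectively, together with the super-sign convention (so that, for instance, the tensor product of two oddly finite-dimensional motives is evenly finite-dimensional). Iterating, $h(C_1 \times \cdots \times C_n) \cong h(C_1) \otimes \cdots \otimes h(C_n)$ is finite-dimensional.

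For step three, let $f \colon Y = C_1 \times \cdots \times C_n \to X$ be a dominant, hence surjective, morphism and set $m := \dim X$, $n := \dim Y \geq m$. I exhibit $h(X)$ directly as a summand of $h(Y)$, thereby bypassing any need to control motives of hyperplane sections of $Y$. Let $H \in A^1(Y)$ be a very ample class, and set
\[ \alpha := {}^{t}\Gamma_f \in A^m(X \times Y), \qquad \beta := \bigl(p_Y^{\ast} H^{n-m}\bigr) \cdot \Gamma_f \in A^n(Y \times X). \]
A direct computation of $\beta \circ \alpha \in A^m(X \times X)$ by pulling back to $X \times Y \times X$, intersecting, and pushing down, using the projection formula and generic smoothness of $f$ restricted to a general complete intersection of type $H^{n-m}$, yields $\beta \circ \alpha = d \cdot \Delta_X$, where $d := \deg\bigl(f|_{H^{n-m}}\bigr)$ is a positive integer. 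Rescaling $\beta$ by $1/d$ displays $h(X)$ as a direct summand of $h(Y)$, and step two then delivers finite-dimensionality of $h(X)$.

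The principal obstacle is step one: odd finite-dimensionality of $h^1(C)$ must be established at the motivic level in $\MM_{\rm rat}$, not merely in cohomology, and this rests on the non-trivial Shermenev/Deninger--Murre motivic decomposition of abelian varieties combined with the canonical identification $h^1(C) \cong h^1(J(C))$. The closure properties in step two are formal, albeit bookkeeping-intensive, and step three reduces to a routine intersection-theoretic exercise once the geometry is in place.
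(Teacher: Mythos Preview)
Your proposal is correct and follows exactly the three-step strategy that the paper's proof invokes by citation: finite-dimensionality of curves \cite[Corollary 4.4]{Kim}, stability under products \cite[Corollary 5.11]{Kim}, and descent along dominant maps \cite[Proposition 6.9]{Kim}. You have simply unpacked the content behind each citation; in particular your explicit correspondence construction in step three, yielding $\beta\circ\alpha=d\cdot\Delta_X$ via $f_\ast H^{n-m}=d\cdot[X]$, is precisely the standard argument underlying Kimura's Proposition 6.9.
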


\begin{proof} A smooth projective curve has finite--dimensional motive \cite[Corollary 4.4]{Kim}. Since finite--dimensionality is stable under taking products of varieties \cite[Corollary 5.11]{Kim}, a product of curves has finite--dimensional motive. Applying \cite[Proposition 6.9]{Kim}, this implies that $X$ has finite--dimensional motive.
\end{proof}

\begin{theorem}\label{blow} Let $X$ be a smooth projective variety, and let $\wt{X}$ be the blow--up of $X$ with smooth center $Y\subset X$. If $X$ and $Y$ have finite--dimensional motive, then also $\wt{X}$ has finite--dimensional motive.
\end{theorem}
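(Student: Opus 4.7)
The plan is to reduce to Manin's blow-up formula in $\MM_{\rm rat}$ and then apply Kimura's stability properties. Recall that if $Y \subset X$ has codimension $c$, then one has an isomorphism
\[
 h(\wt{X})\ \cong\ h(X)\ \oplus\ \bigoplus_{k=1}^{c-1} h(Y)(-k)
\]
in $\MM_{\rm rat}$, realized by explicit correspondences constructed from the blow-up diagram together with the exceptional divisor $E = \PP(N_{Y/X}) \to Y$. This classical decomposition is the only substantial input.

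Given the formula, the conclusion is almost automatic. The Lefschetz motive $\LLL$ is a direct summand of $h(\PP^1)$, and $\PP^1$ has finite-dimensional motive by \cite[Corollary 4.4]{Kim}; stability under direct summands \cite[Proposition 6.9]{Kim} then yields finite-dimensionality of $\LLL$. Stability under tensor products \cite[Corollary 5.11]{Kim} shows that each Tate twist $h(Y)(-k) \cong h(Y) \otimes \LLL^{\otimes k}$ is finite-dimensional, and finally stability under direct sums of finite-dimensional motives gives finite-dimensionality of $h(\wt{X})$.

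There is no real obstacle here: modulo Manin's blow-up formula, this is essentially a one-line application of Kimura's foundational results, in exactly the same spirit as the proof of theorem \ref{domcurves}. If one wished to avoid citing Manin, one could construct the two projectors onto $h(X)$ and $\bigoplus h(Y)(-k)$ by hand via the correspondences $[\Gamma_\pi]$ and the class of $E$, and verify they are mutually orthogonal and sum to the diagonal of $\wt{X}$ — but this effectively reproves the blow-up formula and is unnecessary for the stated application.
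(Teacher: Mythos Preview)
Your argument is correct and is exactly the approach the paper takes: the paper's proof consists of the single sentence ``This is well--known, and follows from the blow--up formula for Chow motives \cite[Theorem 2.8]{Sc},'' and you have simply spelled out why the blow--up decomposition together with Kimura's stability results (direct sums, tensor products, summands) gives the conclusion.
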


\begin{proof} This is well--known, and follows from the blow--up formula for Chow motives \cite[Theorem 2.8]{Sc}.
\end{proof}

An essential property of varieties with finite--dimensional motive is embodied by the nilpotence theorem:

\begin{theorem}[Kimura \cite{Kim}]\label{nilp} Let $X$ be a smooth projective variety of dimension $n$ with finite--dimensional motive. Let $\Gamma\in A^n(X\times X)_{}$ be a correspondence which is numerically trivial. Then there is $N\in\NN$ such that
     \[ \Gamma^{\circ N}=0\ \ \ \ \in A^n(X\times X)_{}\ .\]
\end{theorem}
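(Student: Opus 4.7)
The plan is to leverage the defining structure of finite--dimensional motives, namely the splitting into evenly and oddly finite-dimensional parts, and to establish a categorical Cayley--Hamilton identity that turns the vanishing of all trace powers of $\Gamma$ into nilpotence.

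I would begin by invoking finite--dimensionality to decompose $h(X)=M^+\oplus M^-$ in $\MM_{\rm rat}$, with $M^+$ evenly finite-dimensional ($\wedge^{a+1}M^+=0$ for some $a$) and $M^-$ oddly finite-dimensional ($\operatorname{Sym}^{b+1}M^-=0$ for some $b$). Viewing $\Gamma\in A^n(X\times X)$ as an endomorphism of $h(X)$, this decomposition produces a $2\times 2$ matrix of blocks $\Gamma^{\epsilon\epsilon'}\colon M^{\epsilon'}\to M^{\epsilon}$, each obtained by pre- and post-composing $\Gamma$ with the projectors onto $M^\pm$. Since numerical equivalence is preserved under composition of correspondences, every block is still numerically trivial. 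Global nilpotence of $\Gamma$ will follow from nilpotence of the two diagonal blocks $\Gamma^{++},\Gamma^{--}$ together with nilpotence of the compositions $\Gamma^{+-}\Gamma^{-+}\in\operatorname{End}(M^+)$ and $\Gamma^{-+}\Gamma^{+-}\in\operatorname{End}(M^-)$, by routine block-matrix bookkeeping.

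The central lemma is: on an evenly finite-dimensional motive $M$ with $\wedge^{n+1}M=0$, any numerically trivial $f\in\operatorname{End}(M)$ satisfies $f^{\circ(n+1)}=0$, and analogously for the oddly finite-dimensional case with $\operatorname{Sym}$ in place of $\wedge$. This is proved in two moves. (i) For every $k\ge 1$, $\operatorname{tr}(f^{\circ k})$ is the intersection number of $f$ with the transpose of $f^{\circ(k-1)}$ on $X\times X$, so numerical triviality of $f$ forces $\operatorname{tr}(f^{\circ k})=0$ for all $k$. (ii) The hypothesis $\wedge^{n+1}M=0$ is equivalent to saying that the antisymmetrizer $\tfrac{1}{(n+1)!}\sum_{\sigma\in\Sy_{n+1}}\operatorname{sgn}(\sigma)\sigma\in\QQ[\Sy_{n+1}]$ acts as zero on $M^{\otimes(n+1)}$; applied to $f^{\otimes(n+1)}$ and unpacked inside $\MM_{\rm rat}$, this yields a polynomial identity
$$f^{n+1}-s_1 f^n+s_2 f^{n-1}-\cdots\pm s_{n+1}\ide_M=0$$
in $\operatorname{End}(M)$, with coefficients $s_i=\operatorname{tr}(\wedge^i f)\in\QQ$. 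By Newton's identities each $s_i$ is a polynomial in the power traces $\operatorname{tr}(f^{\circ j})$, $j\le i$, so (i) forces $s_1=\cdots=s_{n+1}=0$, whence $f^{\circ(n+1)}=0$.

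The main obstacle is step (ii): the Cayley--Hamilton identity must be manufactured purely inside the tensor category $\MM_{\rm rat}$, where no \emph{a priori} notion of determinant is available, by exploiting the idempotent decomposition of $\QQ[\Sy_{n+1}]$ acting on $M^{\otimes(n+1)}$. This is the technical core of \cite{Kim}. Once this identity and its odd analogue are secured, all four blocks of $\Gamma$ are nilpotent, and combining their nilpotence indices with the $2\times 2$ block structure of $\Gamma$ produces an explicit integer $N$, depending only on the Kimura dimensions $a,b$, with $\Gamma^{\circ N}=0$.
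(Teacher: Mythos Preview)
Your sketch is correct and is essentially Kimura's original argument; the paper itself does not reprove the statement but simply cites \cite[Proposition 7.5]{Kim}. What you outline---the even/odd splitting $h(X)=M^+\oplus M^-$, the categorical Cayley--Hamilton identity coming from $\wedge^{a+1}M^+=0$ (resp.\ $\operatorname{Sym}^{b+1}M^-=0$), the vanishing of all power traces via numerical triviality, and the block-matrix assembly---is precisely the content of that reference, so your proposal and the paper's (deferred) proof coincide.
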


\begin{proof} This is \cite[Proposition 7.5]{Kim}.
\end{proof}

 Actually, the nilpotence property (for all powers of $X$) could serve as an alternative definition of finite--dimensional motive, as shown by 
 Jannsen \cite[Corollary 3.9]{J4}.
 
Conjecturally, any variety has finite--dimensional motive \cite[Conjecture 7.1]{Kim}; we are still far from knowing this.

\subsection{MCK decomposition}

\begin{definition}[Murre \cite{Mur}] Let $X$ be a projective quotient variety of dimension $n$. We say that $X$ has a {\em CK decomposition\/} if there exists a decomposition of the diagonal
   \[ \Delta_X= \pi_0+ \pi_1+\cdots +\pi_{2n}\ \ \ \hbox{in}\ A^n(X\times X)\ ,\]
  such that the $\pi_i$ are mutually orthogonal idempotents and $(\pi_i)_\ast H^\ast(X)= H^i(X)$.
  
  (NB: ``CK decomposition'' is shorthand for ``Chow--K\"unneth decomposition''.)
\end{definition}

\begin{remark} The existence of a CK decomposition for any smooth projective variety is part of Murre's conjectures \cite{Mur}, \cite{J2}. 
\end{remark}

\begin{definition}[Shen--Vial \cite{SV}]\label{small} Let $X$ be a projective quotient variety of dimension $n$. Let $\Delta_X^{sm}\in A^{2n}(X\times X\times X)$ be the class of the small diagonal
  \[ \Delta_X^{sm}:=\bigl\{ (x,x,x)\ \vert\ x\in X\bigr\}\ \subset\ X\times X\times X\ .\]
  An {\em MCK decomposition\/} is a CK decomposition $\{\pi^X_i\}$ of $X$ that is {\em multiplicative\/}, i.e. it satisfies
  \[ \pi^X_k\circ \Delta_X^{sm}\circ (\pi^X_i\times \pi^X_j)=0\ \ \ \hbox{in}\ A^{2n}(X\times X\times X)\ \ \ \hbox{for\ all\ }i+j\not=k\ .\]
  
 (NB: ``MCK decomposition'' is shorthand for ``multiplicative Chow--K\"unneth decomposition''.) 
  
 A {\em weak MCK decomposition\/} is a CK decomposition $\{\pi^X_i\}$ of $X$ that satisfies
    \[ \Bigl(\pi^X_k\circ \Delta_X^{sm}\circ (\pi^X_i\times \pi^X_j)\Bigr){}_\ast (a\times b)=0 \ \ \ \hbox{for\ all\ } a,b\in\ A^\ast(X)\ .\]
  \end{definition}
  
  \begin{remark} The small diagonal (seen as a correspondence from $X\times X$ to $X$) induces the {\em multiplication morphism\/}
    \[ \Delta_X^{sm}\colon\ \  h(X)\otimes h(X)\ \to\ h(X)\ \ \ \hbox{in}\ \MM_{\rm rat}\ .\]
 Suppose $X$ has a CK decomposition
  \[ h(X)=\bigoplus_{i=0}^{2n} h^i(X)\ \ \ \hbox{in}\ \MM_{\rm rat}\ .\]
  By definition, this decomposition is multiplicative if for any $i,j$ the composition
  \[ h^i(X)\otimes h^j(X)\ \to\ h(X)\otimes h(X)\ \xrightarrow{\Delta_X^{sm}}\ h(X)\ \ \ \hbox{in}\ \MM_{\rm rat}\]
  factors through $h^{i+j}(X)$.
  
  If $X$ has a weak MCK decomposition, then setting
    \[ A^i_{(j)}(X):= (\pi^X_{2i-j})_\ast A^i(X) \ ,\]
    one obtains a bigraded ring structure on the Chow ring: that is, the intersection product sends $A^i_{(j)}(X)\otimes A^{i^\prime}_{(j^\prime)}(X) $ to  $A^{i+i^\prime}_{(j+j^\prime)}(X)$.
    
      It is expected (but not proven !) that for any $X$ with a weak MCK decomposition, one has
    \[ A^i_{(j)}(X)\stackrel{??}{=}0\ \ \ \hbox{for}\ j<0\ ,\ \ \ A^i_{(0)}(X)\cap A^i_{hom}(X)\stackrel{??}{=}0\ ;\]
    this is related to Murre's conjectures B and D, that have been formulated for any CK decomposition \cite{Mur}.

  The property of having an MCK decomposition is severely restrictive, and is closely related to Beauville's ``(weak) splitting property'' \cite{Beau3}. For more ample discussion, and examples of varieties with an MCK decomposition, we refer to \cite[Section 8]{SV}, as well as \cite{V6}, \cite{SV2}, \cite{FTV}.
    \end{remark}

\subsection{The Fourier decomposition}

In what follows, we will make use of the following: 

\begin{theorem}[Shen--Vial \cite{SV}]\label{sv} Let $Y\subset\PP^5(\C)$ be a smooth cubic fourfold, and let $X:=F(Y)$ be the Fano variety of lines in $Y$. There exists a self--dual CK decomposition $\{\Pi^X_i\}$ for $X$, and 
  \[ (\Pi^X_{2i-j})_\ast A^i(X) = A^i_{(j)}(X)\ ,\]
  where the right--hand side denotes the splitting of the Chow groups defined in terms of the Fourier transform as in \cite[Theorem 2]{SV}. Moreover, we have
  \[ A^i_{(j)}(X)=0\ \ \ \hbox{if\ }j<0\ \hbox{or\ }j>i\ \hbox{or\ } j\ \hbox{is\ odd}\ .\]
  
  In case $Y$ is very general, the Fourier decomposition $A^\ast_{(\ast)}(X)$ forms a bigraded ring, and hence
  $\{\pi^X_i\}$ is a weak MCK decomposition.
    \end{theorem}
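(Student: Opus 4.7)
The plan is to realize the Chow--K\"unneth decomposition via an explicit Fourier transform on $A^\ast(X)$ built from the incidence correspondence of lines, following the blueprint of Beauville--Bogomolov on the cohomological side. First, I would introduce the incidence cycle $I\subset X\times X$, the codimension--$2$ subvariety of pairs of lines that meet, together with the Pl\"ucker class $g\in A^1(X)$. Writing $g_i:=p_i^\ast g$, I would form the corrected class $L:=I-(ag_1^2+bg_2^2+cg_1g_2)$ for suitable rational constants $a,b,c$, chosen so that $[L]\in H^4(X\times X)$ is (a nonzero multiple of) the Beauville--Bogomolov form. The crucial geometric input is then Voisin's quadratic equation for $L$ in $A^\ast(X\times X)$, coming from the rational self--map $\phi\colon X\dashrightarrow X$ sending a generic line to the residual line of its second--order plane; this equation expresses $L^2$ as an explicit polynomial in $g_1,g_2,L$.

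Next, I would define the Fourier transform $\FF\colon A^\ast(X)\to A^\ast(X)$ by $\FF(a):=(p_2)_\ast\bigl(\exp(L)\cdot p_1^\ast a\bigr)$. Using the quadratic equation, one establishes Fourier inversion, i.e.\ $\FF\circ\FF$ equals a known scalar operator, which forces a spectral decomposition of $A^\ast(X)$ into subspaces $A^i_{(j)}(X)$. The projectors $\Pi^X_i$ are then assembled from explicit polynomial expressions in $L$, $g_1$, $g_2$, and their products with the diagonal; verifying that they are mutually orthogonal idempotents cutting out $H^i(X)$ cohomologically reduces to finite computations of cohomology classes, and automatically yields the identification $(\Pi^X_{2i-j})_\ast A^i(X)=A^i_{(j)}(X)$.

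The vanishing $A^i_{(j)}(X)=0$ for $j<0$, $j>i$, or $j$ odd would be handled case by case. For $j$ odd, it is forced by $H^{\rm odd}(X)=0$ combined with Mumford--Roitman style arguments relying on finite--dimensionality of the motive of $X$ (available via Theorem \ref{domcurves} and Theorem \ref{blow} applied to a suitable curve construction, together with Theorem \ref{nilp}). For $j$ outside the interval $[0,i]$, degree and codimension bookkeeping on the graded pieces of $\exp(L)$ rule out nontrivial contributions, once the bound on Chow--theoretic dimensions coming from the Fano geometry is used.

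Finally, for very general $Y$, proving that $A^\ast_{(\ast)}(X)$ is a bigraded ring is, I expect, the principal obstacle. The strategy would be to pull back the small diagonal $\Delta_X^{sm}\in A^8(X\times X\times X)$ and to expand it as a polynomial in the pull--backs of $L$ and the $g_i$ via a cubic identity in $A^\ast(X^3)$, derived once more from Voisin's self--map $\phi$ and the ruling geometry. A spread argument over the moduli of smooth cubic fourfolds lifts generic cohomological identities to rational equivalence, so that in the very general case the intersection product respects the Fourier bigrading by construction. The hardest step is securing this cubic identity modulo rational equivalence; it requires delicate control of correspondences on $X^3$ and is the technical heart of the Shen--Vial proof.
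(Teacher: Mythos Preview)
Your sketch follows the Shen--Vial construction faithfully in its main outline: the incidence cycle $I$, the corrected class $L$ representing the Beauville--Bogomolov form, Voisin's quadratic relation coming from the rational self--map $\phi$, the Fourier transform $\FF$ built from $\exp(L)$, and the explicit polynomial projectors $\Pi^X_i$. For the bigraded ring structure in the very general case you correctly identify the decomposition of the small diagonal and the spread argument as the heart of the matter. In the paper itself this theorem is not reproved: the proof simply locates each assertion in \cite{SV} (Theorem~3.3 for the self--dual CK decomposition and its compatibility with the Fourier splitting, together with Murre's conjecture~B for the vanishing range; Theorem~3 for the very general case). So your proposal goes well beyond what the paper records, and for the most part accurately.

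There is, however, one genuine gap. For the vanishing of $A^i_{(j)}(X)$ when $j$ is odd you invoke finite--dimensionality of the motive of $X$, claiming it is ``available via Theorem~\ref{domcurves} and Theorem~\ref{blow} applied to a suitable curve construction''. This is not true for an \emph{arbitrary} smooth cubic fourfold $Y$: finite--dimensionality of $h(Y)$ (and hence of $h(F(Y))$) is a wide--open problem in general. In the present paper it is established only for the special cubics $f(X_0,\dots,X_3)+g(X_4,X_5)=0$ (Proposition~\ref{findim}), via the Shioda inductive structure; there is no ``suitable curve construction'' covering all smooth cubic fourfolds. Your nilpotence argument (homologically trivial odd projector $\Rightarrow$ nilpotent $\Rightarrow$ zero, being idempotent) therefore does not go through in the generality of the statement. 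In Shen--Vial the odd vanishing is obtained directly from the explicit construction: since $H^{\rm odd}(X)=0$, the decomposition $\Delta_X=\Pi^X_0+\Pi^X_2+\Pi^X_4+\Pi^X_6+\Pi^X_8$ is produced as an identity in $A^4(X\times X)$ with no odd terms present, so no appeal to Kimura finite--dimensionality is needed. You should replace the Mumford--Roitman/nilpotence step by this direct construction.
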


\begin{proof} (A matter of notation: what we denote $A^i_{(j)}(X)$ is denoted $CH^i(X)_j$ in \cite{SV}.)

The existence of a self--dual CK decomposition $\{\Pi^X_i\}$ is \cite[Theorem 3.3]{SV}. (More in detail: \cite[Theorem 3.3]{SV} applies to any hyperk\"ahler fourfold $F$ of $K3^{[2]}$ type with a cycle class $L\in A^2(F\times F)$ that represents the Beauville--Bogomolov pairing and satisfies \cite[equalities (6), (7), (8), (9)]{SV}. For the Fano variety of lines of a cubic fourfold, the cycle $L$ of \cite[definition (107)]{SV} has these properties, as shown in \cite[Section 3]{SV}.)

According to \cite[Theorem 3.3]{SV}, the given CK decomposition agrees with the Fourier decomposition of the Chow groups. The ``moreover'' part is because the 
$\{\Pi^X_i\}$ are shown to satisfy Murre's conjecture B \cite[Theorem 3.3]{SV}.

The statement for very general cubics is \cite[Theorem 3]{SV}.
    \end{proof}

\begin{remark}\label{pity} Unfortunately, it is not yet known that the Fourier decomposition of \cite{SV} induces a bigraded ring structure on the Chow ring for {\em all\/} Fano varieties $X$ of smooth cubic fourfolds. For one thing, it has not yet been proven that 
  \[  A^2_{(0)}(X)\cdot A^2_{(0)}(X)\ \stackrel{??}{\subset}\  A^4_{(0)}(X)\] 
  (cf. \cite[Section 22.3]{SV} for discussion).
\end{remark}

\subsection{Refined CK decomposition}

\begin{theorem}[]\label{pi20} Let $X$ be a smooth projective hyperk\"ahler fourfold of $K3^{[2]}$--type. Assume that $X$ has finite--dimensional motive. Then $X$ has a CK decomposition $\{\pi^X_i\}$. Moreover, there exists a further splitting
  \[ \pi^X_2 = \pi^X_{2,0} + \pi^X_{2,1}\ \ \ \hbox{in}\ A^4(X\times X)\ ,\]
  where $\pi^X_{2,0}$ and $\pi^X_{2,1}$ are orthogonal idempotents, and $\pi^X_{2,1}$ is supported on $C\times D\subset X\times X$, where $C$ and $D$ are a curve, resp. a divisor on $X$.
  The action on cohomology verifies
  \[ (\pi^X_{2,0})_\ast H^\ast(X) = H^2_{tr}(X)\ ,\]
  where $H^2_{tr}(X)\subset H^2(X)$ is defined as the orthogonal complement of $NS(X)$ with respect to the Beauville--Bogomolov form. The action on Chow groups verifies
  \[  (\pi^X_{2,0})_\ast A^2(X) = (\pi^X_2)_\ast A^2_{}(X)\ .\]
  \end{theorem}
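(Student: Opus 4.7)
The plan is to construct a Chow idempotent $\pi_{2,1}^X$ that projects cohomologically onto $NS(X)\subset H^2(X)$ and is supported on a curve times a divisor, and then to set $\pi_{2,0}^X := \pi_2^X - \pi_{2,1}^X$. Existence of the CK decomposition $\{\pi_i^X\}$ itself follows by adapting the Shen--Vial construction of theorem~\ref{sv} to any $K3^{[2]}$-type hyperk\"ahler fourfold, or alternatively by lifting the (algebraic) K\"unneth projectors to Chow via finite-dimensionality and theorem~\ref{nilp}.

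To build the cohomological projector onto $NS$, fix a basis $D_1,\ldots,D_\rho$ of $NS(X)_{\QQ}$ and an ample class $H\in A^1(X)$. By the Fujiki relation, the pairing $(\alpha,\beta)\mapsto\deg(\alpha\cdot\beta\cdot H^2)$ is non-degenerate on $NS(X)_\QQ$ and pairs $NS(X)$ trivially with $H^2_{tr}(X)$. Setting $\widetilde{C}_i := D_i\cdot H^2\in A^3(X)$ and changing basis, I obtain $1$-cycles $C_1,\ldots,C_\rho$ satisfying $\deg(C_i\cdot D_j)=\delta_{ij}$. Define
\[ p := \sum_{j=1}^\rho C_j\times D_j \;\in\; A^4(X\times X), \]
which is supported on $C\times D$ with $C = \bigcup_j |C_j|$ a curve and $D = \bigcup_j |D_j|$ a divisor. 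The composition formula $(A\times B)\circ(A'\times B') = \deg(A\cdot B')\,(A'\times B)$ gives $p\circ p = p$. Computing $p_\ast(\alpha) = \sum_j \deg(C_j\cdot\alpha)\, D_j$, one sees this vanishes on $H^i$ for $i\neq 2$ (by degree) and on $H^2_{tr}(X)$ (by Fujiki), while acting as the identity on $NS(X)$.

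Now set $p' := \pi_2^X\circ p\circ \pi_2^X$. Using $\gamma\circ(A\times B) = A\times\gamma_\ast(B)$ and $(A\times B)\circ\gamma = \gamma^\ast(A)\times B$, one finds
\[ p' = \sum_{j=1}^\rho (\pi_2^X)^\ast(C_j)\times(\pi_2^X)_\ast(D_j), \]
still supported on a curve times a divisor, and cohomologically identical to $p$ (since $\pi_2^X$ acts as the identity on $H^2(X)$). Consequently $p'\circ p'-p'$ is homologically trivial, hence by finite-dimensionality and theorem~\ref{nilp} nilpotent in $A^4(X\times X)$. A standard polynomial identity then produces $f(t)$ with $f(0)=0$ such that $\pi_{2,1}^X := f(p')$ is a Chow idempotent in the same cohomology class as $p$; iterating the composition formula shows that every $p'^{\circ n}$, and hence $\pi_{2,1}^X$, remains supported on a curve times a divisor. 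The identity $\pi_2^X\circ p' = p' = p'\circ\pi_2^X$ propagates to $\pi_2^X\circ\pi_{2,1}^X = \pi_{2,1}^X = \pi_{2,1}^X\circ\pi_2^X$.

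Setting $\pi_{2,0}^X := \pi_2^X - \pi_{2,1}^X$, orthogonality and idempotence of $\pi_{2,0}^X,\pi_{2,1}^X$ follow at once. The cohomological action of $\pi_{2,0}^X$ is the identity minus the $NS$-projector on $H^2$, i.e.\ the $H^2_{tr}$-projector, and zero on $H^i$ for $i\neq 2$. For the Chow statement $(\pi_{2,0}^X)_\ast A^2(X) = (\pi_2^X)_\ast A^2(X)$, it suffices to show $(\pi_{2,1}^X)_\ast A^2(X) = 0$: since $\pi_{2,1}^X$ is supported on $C'\times D'$ with $C'$ a $1$-dimensional cycle, and $A^2(C') = 0$ for dimension reasons, the Gysin pullback of any $\alpha\in A^2(X)$ to the support of $\pi_{2,1}^X$ vanishes, yielding $(\pi_{2,1}^X)_\ast(\alpha)=0$. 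The main obstacle is keeping the curve-times-divisor support through the orthogonalization and the nilpotence correction; this is handled by the observation that compositions of such cycles with arbitrary correspondences remain of the same form.
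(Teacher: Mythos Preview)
Your proof is correct and essentially reproduces, in the special case needed here, the construction carried out in full generality in Vial's \cite{V4} --- which is exactly what the paper invokes. The paper's own proof is purely by citation: it observes that $X$ satisfies the Lefschetz standard conjecture $B(X)$ by \cite{CM}, combines this with finite--dimensionality to obtain a CK decomposition via \cite[Lemma 5.4]{J2}, and then appeals to \cite[Theorems 1 and 2]{V4} for the refined projectors $\pi_{i,j}^X$ (with $\pi_{2,1}^X$ supported on curve$\times$divisor by construction, hence acting trivially on $A^2(X)$ for dimension reasons). Your argument unpacks the relevant piece of \cite{V4} by hand: you build the algebraic $NS$--projector explicitly via the Fujiki relation, orthogonalize against $\pi_2^X$, and lift to a Chow idempotent by nilpotence. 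The control of the support through the orthogonalization and the nilpotence correction via the composition formula for decomposable correspondences is exactly the mechanism Vial uses.

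One small point deserves mention. For the existence of the CK decomposition itself, your parenthetical ``(algebraic) K\"unneth projectors'' hides a genuine input: you need the K\"unneth components to be algebraic, and this is where the paper invokes \cite{CM} (Lefschetz standard for $K3^{[2]}$--type varieties). Your first alternative, ``adapting the Shen--Vial construction of theorem~\ref{sv}'', is less safe than it sounds: the construction in \cite{SV} requires a cycle $L\in A^2(X\times X)$ lifting the Beauville--Bogomolov class and satisfying several quadratic relations, and this is established in \cite{SV} for Fano varieties of cubic fourfolds, not for arbitrary $K3^{[2]}$--type fourfolds. So your second alternative (together with \cite{CM}) is the correct route, and it coincides with the paper's.
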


  \begin{proof} It is known \cite{CM} that $X$ verifies the Lefschetz standard conjecture $B(X)$. Combined with finite--dimensionality, this implies the existence of a CK decomposition \cite[Lemma 5.4]{J2}.
  
  For the ``moreover'' statement, one observes that $X$ verifies conditions (*) and (**) of Vial's \cite{V4}, and so \cite[Theorems 1 and 2]{V4} apply. This gives the existence of refined CK projectors $\pi^X_{i,j}$, which act on cohomology as projectors on gradeds for the ``niveau filtration'' $\wt{N}^\ast$ of loc. cit. In particular, $\pi^X_{2,1}$ acts as projector on $NS(X)$, and $\pi^X_{2,0}$ acts as projector on $H^2_{tr}(X)$. The projector $\pi^X_{2,1}$, being supported on $C\times D$, acts trivially on $A^2(X)$ for dimension reasons; this proves the last equality.
   \end{proof}





     

\section{Main result}

\begin{theorem}\label{main}  Let $Y\subset\PP^5(\C)$ be a smooth cubic fourfold defined by an equation
    \[ f(X_0,X_1,X_2,X_3)+ g(X_4,X_5)=0\ ,\]
    where $f$ and $g$ are homogeneous polynomials of degree $3$. Let $X=F(Y)$ be the Fano variety of lines in $Y$. 
    Let $\sigma\in\aut(X)$ be the order $3$ non--symplectic automorphism induced by
    \[  \begin{split}\sigma_\PP\colon\ \ \  \PP^5(\C)\ &\to\ \PP^5(\C)\ ,\\
                         [X_0:\ldots:X_5]\ &\mapsto\ [X_0:X_1:X_2:X_3:\nu X_4:\nu X_5]\ ,\\
                         \end{split}\]
    where $\nu$ is a primitive $3$rd root of unity. 
    
    Then
    \[ (\ide +\sigma+\sigma^2)_\ast \ A^i_{(j)}(X)=0\ \ \ \hbox{for}\ (i,j)\in\{(2,2),(4,2),(4,4)\}\ .\]
    In particular,
     \[  (\ide +\sigma+\sigma^2)_\ast \ A^4_{hom}(X)=0  \ .\]   
     \end{theorem}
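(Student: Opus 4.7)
The plan is to combine Shen--Vial's Fourier/CK decomposition (Theorem \ref{sv}) with Kimura's nilpotence theorem (Theorem \ref{nilp}) to lift a cohomological vanishing of a correspondence to a Chow-level vanishing, once $X$ is known to have finite-dimensional motive.

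First I would verify that $X=F(Y)$ has finite-dimensional motive. The special shape $f(X_0,\ldots,X_3)+g(X_4,X_5)=0$ decouples the two groups of variables; writing $g$ as a product of three linear forms on $\PP^1$, one can exhibit $Y$ birationally as a cyclic triple cover of a rational variety, hence dominated (motivically) by a variety built from products of curves and the cubic surface $\{f=0\}\subset\PP^3$. Combined with Theorems \ref{domcurves} and \ref{blow}, and with the motivic description of the Fano variety of lines via the universal line correspondence, this yields finite-dimensionality for $X=F(Y)$.

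Next, Theorem \ref{sv} provides the self-dual CK decomposition $\{\pi^X_i\}$ compatible with the Fourier bigrading $A^\ast(X)=\bigoplus A^i_{(j)}(X)$, and the argument of Theorem \ref{pi20} (via Vial \cite{V4}) yields refined projectors $\pi^X_{2,0}$, and analogously $\pi^X_{4,\mathrm{tr}}$ and $\pi^X_{6,\mathrm{tr}}$, that project cohomologically onto the transcendental subpieces of $H^2(X)$, $H^4(X)$, $H^6(X)$ and that act on the relevant Chow groups exactly as $\pi^X_2,\pi^X_4,\pi^X_6$ do (the complementary refined projectors are supported on products of lower-dimensional subvarieties and hence act trivially for dimension reasons, as in the proof of Theorem \ref{pi20}). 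For each $(i,j)\in\{(2,2),(4,2),(4,4)\}$ it then suffices to verify the cohomological vanishing of $(\ide+\sigma+\sigma^2)^\ast$ on the corresponding transcendental piece: since $\sigma$ is non--symplectic of order $3$, it acts on $H^{2,0}(X)=\C\omega$ by a primitive cube root $\nu$, so on any $\sigma$-eigenspace with character $\nu^k$ one has $1+\nu^k+\nu^{2k}=0$ whenever $k\not\equiv 0\pmod 3$; the non--symplecticity combined with the isometry of $\sigma^\ast$ for the Beauville--Bogomolov form rules out fixed classes in these transcendental pieces.

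Finally, for each $(i,j)$ I form $\Gamma_{i,j}:=(\ide+\sigma+\sigma^2)\circ\pi^X_{2i-j,\mathrm{tr}}\in A^4(X\times X)$. By the previous step $\Gamma_{i,j}$ is numerically trivial, and by finite-dimensionality and Theorem \ref{nilp} some composition power $\Gamma_{i,j}^{\circ N}$ vanishes in $A^4(X\times X)$. Since $(\ide+\sigma+\sigma^2)/3$ is idempotent and commutes with the refined projector (because $\sigma$ preserves the Shen--Vial cycle $L$ defining the Fourier transform), this composition power reduces to a scalar multiple of $\Gamma_{i,j}$ itself, forcing $\Gamma_{i,j}=0$; hence $(\ide+\sigma+\sigma^2)_\ast A^i_{(j)}(X)=0$, and together with $A^4_{hom}(X)=A^4_{(2)}(X)\oplus A^4_{(4)}(X)$ this yields the ``in particular'' statement. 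The main obstacle will be the first step, since establishing finite-dimensionality of $F(Y)$ requires unwinding the Fano-of-lines geometry concretely using the factored equation $f+g$; a secondary difficulty is that Theorem \ref{pi20} spells out the refinement only for $\pi^X_2$, so its analogues for $\pi^X_4$ and $\pi^X_6$ must be constructed in this setting, and Remark \ref{pity} warns that multiplicativity of the Fourier decomposition is not known for general cubics, so the three Fourier pieces must be treated independently rather than reduced to $(2,2)$ by a ring-theoretic argument.
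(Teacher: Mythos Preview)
Your overall strategy---Shen--Vial CK projectors plus finite-dimensionality plus Kimura nilpotence---is the paper's, and your treatment of $A^2_{(2)}$ and $A^4_{(2)}$ is close to what the paper does. The genuine gap is the $A^4_{(4)}$ case.

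Your eigenvalue argument does not go through for $H^4$. Since $\sigma^\ast$ has no nonzero fixed vector on $H^2_{tr}(X)$, over $\C$ one has $H^2_{tr}(X)\otimes\C=V_\nu\oplus V_{\nu^2}$; on $\hbox{Sym}^2 H^2_{tr}(X)\subset H^4(X)$ the eigenvalues of $\sigma^\ast$ are then $\nu^2$, $\nu$, and also $1$ (on $V_\nu\cdot V_{\nu^2}$, which contains for instance $\omega\cup\bar\omega$). Hence $\sigma^\ast$ \emph{does} have nonzero fixed classes in the transcendental part of $H^4$, and $(\ide+\sigma+\sigma^2)^\ast$ is not zero there by character considerations alone. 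Non-symplecticity together with the Beauville--Bogomolov isometry does not rule these classes out.

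This is exactly why the paper avoids Vial's $\pi^X_{4,0}$: as Remark \ref{refined} explains, checking $\Delta_G\circ\pi^X_{4,0}=0$ in cohomology would amount to knowing that the $\sigma$-invariant part of $H^4(X)$ lies in $\wt{N}^1H^4(X)$, i.e.\ essentially $H^4\cap F^1=N^1H^4$ together with $N^1=\wt{N}^1$, both open. Instead the paper exploits $H^4(X)=\hbox{Sym}^2H^2(X)$ directly: using a right-inverse $\Psi$ of the small diagonal and the splitting $\pi^X_2=\pi^X_{2,0}+\pi^X_{2,1}$, it builds an ad hoc decomposition $\Pi^X_4=\sum_{k,\ell}\Pi^X_{4,k,\ell}$ modulo homological equivalence, in which the pieces with $(k,\ell)\neq(0,0)$ are supported on $X\times D$ and hence (after transposing) act trivially on $A^4$; the remaining piece is then killed via the cup-product identity $\sigma^\ast c\cup(\sigma^2)^\ast c=c\cup c$ for $c\in H^2_{tr}(X)$ (Claim \ref{claim}), which is precisely the extra input needed to dispose of the $V_\nu\cdot V_{\nu^2}$ contribution. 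Your sketch is missing this step entirely.

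Two smaller points. First, even granting a cohomological vanishing, your conclusion ``$\Gamma_{i,j}=0$ in $A^4(X\times X)$'' is too strong: commutation of $\Delta_G$ with the CK projectors is only established at the level of their action on $A^\ast(X)$ (Corollary \ref{commut}, which the paper proves via the Voisin map $\phi$ rather than via $L$), not as an equality of correspondences, so after nilpotence one only concludes that the \emph{action} of $\Delta_G\circ\Pi^X_j$ on the relevant Chow group vanishes. Second, for finite-dimensionality the paper uses the Shioda inductive structure \cite{KS} to dominate $Y$ by $Y_1\times Y_2$ (a cubic threefold times an elliptic curve) and then invokes \cite{fano} to pass from $Y$ to $F(Y)$; your triple-cover sketch and the appeal to ``the universal line correspondence'' do not, as written, yield finite-dimensionality of $F(Y)$.
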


 \begin{proof}  
 (NB: the family of Fano varieties of theorem \ref{main} is described in \cite[Example 6.5]{BCS}, from which I learned that the automorphism $\sigma$ is 
 non--symplectic.)
 
 The last phrase of the theorem follows from the one--but--last phrase, since 
   \[A^4_{hom}(X)=A^4_{(2)}(X)\oplus A^4_{(4)}(X)\ \] 
   \cite[Theorem 4]{SV}.
 
 In a first step of the proof, let us show that the automorphism $\sigma$ respects (most of) the Fourier decomposition of the Chow ring:
 
 \begin{proposition}\label{compat} Let $X$ and $\sigma$ be as in theorem \ref{main}. Let $A^\ast_{(\ast)}(X)$ be the Fourier decomposition (theorem \ref{sv}).
 Then
   \[ \sigma_\ast \, A^i_{(j)}(X)\ \subset\ A^i_{(j)}(X)\ \ \ \forall (i,j)\not=(2,0)\ .\]
  \end{proposition}
  
  \begin{proof} Here, the alternative description of the Fourier decomposition $A^\ast_{(\ast)}(X)$ in terms of a certain rational map
    $ \phi\colon X \dashrightarrow X$
    comes in handy.
  
  Let $Y\subset\PP^5(\C)$ be {any\/} smooth cubic fourfold (not necessarily with automorphisms), and let $X=F(Y)$ be the Fano variety of lines in $Y$. There exists a degree $16$ rational map
    \[ \phi\colon\ \ \ X\ \dashrightarrow\ X\ \]
    \cite{V21}, \cite[Section 18]{SV}. The map $\phi$ is defined as follows: Let $x\in X$ be a point, and let $\ell\subset Y$ be the line corresponding to $x$. For a general point $x\in X$, there is a unique plane
   $H\subset\PP^5$ that is tangent to $Y$ along $\ell$. Then $\phi(x)\in X$ is defined as the point corresponding to $\ell^\prime\subset Y$, where
      \[ H\cap Y = 2\ell +\ell^\prime\ .\]

    As in \cite[Definition 21.8]{SV}, for any $\lambda\in\QQ$ let us consider the eigenspaces
      \[  V^i_\lambda:= \bigl\{  c\in A^i(X)\ \vert\ \phi^\ast(c)=\lambda \cdot c\bigr\}\ .\]
      
      These eigenspaces are related to the Fourier decomposition of the Chow ring: indeed, Shen--Vial show \cite[Theorem 21.9 and Proposition 21.10]{SV} that there is a decomposition
      \begin{equation}\label{thisdeco}  A^i_{(j)}(X) = V^i_{\lambda_1}\oplus\cdots\oplus V^i_{\lambda_r}\ \ \ \ \ \forall (i,j)\not=(2,0)\ .\end{equation}
      
     Let us now return to $X$ and $\sigma$ as in theorem \ref{main}, and let us prove proposition \ref{compat}. In view of the decomposition (\ref{thisdeco}), we see that to prove proposition \ref{compat}, it suffices to prove the following:
      
      \begin{claim}\label{phi} Let $X$ and $\sigma$ be as in theorem \ref{main}. Then
        \[ \phi^\ast \sigma^\ast = \sigma^\ast \phi^\ast\colon\ \ \ A^i(X)\ \to\ A^i(X)\ .\]
       \end{claim}
       
       In order to prove the claim, we first establish a little lemma:
       
       \begin{lemma}\label{=} Set--up as above. There is an equality of rational maps
       \[   \phi\circ\sigma =\sigma\circ\phi\colon\ \ \ X\ \dashrightarrow\ X\ .\]
       \end{lemma}
       
    \begin{proof}
    Let $x\in X$ be a point outside of the indeterminacy locus of $\phi$, and let $H\subset\PP^5$ be the plane tangent to $Y$ along the line $\ell$ corresponding to $x$. By definition, $\phi(x)\in X$ is the point corresponding to $\ell^\prime\subset Y$, where
      \[ H\cap Y = 2\ell +\ell^\prime\ .\]
      Let $\sigma_\PP\colon\PP^5\to\PP^5$ denote the linear transformation inducing the automorphism $\sigma$.
     The plane $\sigma_\PP(H)$ is tangent to $Y$ along $\sigma_\PP(\ell)$, and
      \[ \sigma_\PP(H)  \cap Y=2\sigma_\PP(\ell)+ \sigma_\PP(\ell^\prime)\ .\]
      It follows that $\phi(\sigma(x))=\sigma(\phi(x))$.
            \end{proof}
      
     Lemma \ref{=} furnishes a commutative diagram
       \[ \begin{array}[c]{ccc}
               X &\stackrel{\phi}{\dashrightarrow}& X\\ 
              \ \ \  \downarrow{\scriptstyle \sigma} &&  \ \ \  \downarrow{\scriptstyle \sigma}\\
                 X &\stackrel{\phi}{\dashrightarrow}& \ X\ .\\ 
             \end{array}\]
        This can be ``resolved'' by a commutative diagram
        \[ \begin{array}[c]{ccccc}
                X & \stackrel{p^\prime}{\leftarrow} & Z^\prime & \stackrel{q^\prime}{\to}& X\\
                 \ \ \  \downarrow{\scriptstyle \sigma} &&   \ \ \  \downarrow{\scriptstyle \sigma_Z}  &&    \ \ \  \downarrow{\scriptstyle \sigma}\\                     
                X & \stackrel{p}{\leftarrow} & Z & \stackrel{q}{\to}&\  X\ ,\\   
                \end{array}\]
              where horizontal arrows are birational morphisms such that $\phi\circ p^\prime =q^\prime$ and $\phi\circ p=q$ (and so $\phi^\ast= p_\ast q^\ast = (p^\prime)_\ast (q^\prime)^\ast\colon A^i(X)\to A^i(X)$.).
                                  
      Let us now prove claim \ref{phi}. We have equalities
      \[ \begin{split}  \phi^\ast \sigma^\ast &= (p^\prime)_\ast (q^\prime)^\ast \sigma^\ast\\
                                                                &= (p^\prime)_\ast (\sigma_Z)^\ast q^\ast\\
                                                                &= \sigma^\ast p_\ast  q^\ast\\
                                                                &= \sigma^\ast \phi^\ast\colon\ \ \ \ \ \ A^i(X)\ \to\ A^i(X)\ .\\
                                                        \end{split}\]
                           Here, in the third equality we have used the following:
                           
               \begin{sublemma} Set--up as above. There is equality
                \[ (p^\prime)_\ast (\sigma_Z)^\ast=   \sigma^\ast p_\ast\colon\ \ \ A^i( Z)\ \to\ A^i(X)  \ .\]              
               \end{sublemma}    
               
        \begin{proof} Since $\sigma_Z$ is a birational morphism, there is equality
         \[   \sigma_\ast(p^\prime)_\ast (\sigma_Z)^\ast= p_\ast (\sigma_Z)_\ast(\sigma_Z)^\ast=p_\ast \colon\ \ \ A^i(Z)\ \to\ A^i(X)\ .\]
         Composing on the left with $\sigma^\ast$, this implies
         \[  \sigma^\ast  \sigma_\ast(p^\prime)_\ast (\sigma_Z)^\ast= \sigma^\ast p_\ast\colon\ \ \ A^i(Z)\ \to\ A^i(X)\ .\]      
         But $\sigma^\ast=(\sigma^2)_\ast$ and so the left--hand side simplifies to $  (p^\prime)_\ast (\sigma_Z)^\ast$, proving the sublemma.        
        \end{proof}

  \end{proof}

  For later use, we recast proposition \ref{compat} as follows:
  
  \begin{corollary}\label{commut} Set--up as above. Let $\{\Pi_j^X\}$ be a CK decomposition as in theorem \ref{sv}. Then
   \[ \sigma_\ast (\Pi_j^X)_\ast   = (\Pi_j^X)_\ast \sigma_\ast  (\Pi_j^X)_\ast\colon\ \ \ A^i(X)\ \to\ A^i(X)\ \ \ \forall (i,j)\not=(2,4)\ .\]  
  \end{corollary}
  
  \begin{proof} This is just a translation of proposition \ref{compat}, using the fact that $\Pi_j^X$ acts on $A^i(X)$ as projector on $A^i_{(2i-j)}(X)$.
  \end{proof}

  The second step of the proof is to ascertain that $X$ has finite--dimensional motive:
  
  \begin{proposition}\label{findim} Let $Y\subset\PP^5(\C)$ and $X=F(Y)$ be as in theorem \ref{main}. Then $Y$ and $X$ have finite--dimensional motive.
  \end{proposition}
  
  \begin{proof} 
  To establish finite--dimensionality of $Y$ is an easy exercice in using what is known as the ``Shioda inductive structure'' \cite{Shi}, \cite{KS}. Indeed, applying 
 \cite[Remark 1.10]{KS}, we find there exists a dominant rational map
   \[ \phi\colon\ \ \ Y_1\times Y_2\ \dashrightarrow\ Y\ ,\]
   where $Y_1\subset\PP^3(\C)$ is the smooth cubic threefold defined as
     \[ f(X_0,X_1,X_2,X_3)+V^3=0\ ,\]
     and $Y_2\subset\PP^2(\C)$ is the smooth cubic curve defined as
               \[ g(X_0,X_1)+W^3=0\ .\]
                 The indeterminacy locus of $\phi$ is resolved by blowing up the locus $S\times P\subset Y_1\times Y_2$, where $S\subset Y_1$ is a cubic surface, and $P\subset Y_2$ is a set of points. Let us call this blow--up $\hat{Y}$. Using theorems \ref{domcurves} and \ref{blow} and an induction on the dimension, we find that
           $\hat{Y}$ has finite--dimensional motive. Since $\hat{Y}$ dominates $Y$, it follows from \cite[Proposition 6.9]{Kim} that the cubic $Y$ has finite--dimensional motive. 
           
     Finally, \cite[Theorem 4]{fano} states that for any cubic $Y$ with finite--dimensional motive, the Fano variety $X=F(Y)$ also has finite--dimensional motive.            
  \end{proof} 
 
 The third step of the proof is to show the desired statement for $A^2_{(2)}(X)$, i.e. we now prove that
   \begin{equation}\label{22}    (\ide +\sigma+\sigma^2)_\ast \ A^2_{(2)}(X)=0\ .  \end{equation}
   
   In order to do so, let us abbreviate
   \[ \Delta_G:= {1\over 3}\bigl(\Delta_X +\Gamma_\sigma +\Gamma_{\sigma\circ\sigma}\bigr)\ \ \ \in\ A^4(X\times X)\ .\]
   Since the action of $\sigma$ is non--symplectic \cite[Example 6.5 and Lemma 6.2]{BCS}, we have that
   \[ (\Delta_G)_\ast=0\colon\ \ \ H^2(X,\OO_X)\ \to\ H^2(X,\OO_X)\ .\] 
   Using the Lefschetz $(1,1)$--theorem, we see that
    \[  \Delta_G\circ \Pi_2^X =\gamma \ \ \ \hbox{in}\ H^8(X\times X)\ ,\]
    where $\gamma$ is some cycle supported on $D\times D\subset X\times X$, for some divisor $D\subset X$. In other words, the correspondence
     \[ \Gamma:=  \Delta_G\circ \Pi_2^X - \gamma  \ \ \ \in A^4(X\times X) \]
     is homologically trivial. But then (since $X$ has finite-dimensional motive) there exists $N\in\NN$ such that
     \[  \Gamma^{\circ N}=0\ \ \ \hbox{in}\ A^4(X\times X)\ .\]
     Upon developing this expression, one finds an equality    
     \[ \Gamma^{\circ N}=  (\Delta_G\circ \Pi_2^X)^{\circ N} +  \gamma^\prime=0\ \ \ \hbox{in}\ A^4(X\times X)\ ,\]
    where $\gamma^\prime$ is supported on $D\times D\subset X\times X$. In particular, $\gamma^\prime$ acts trivially on $A^2_{(2)}(X)\subset A^2_{AJ}(X)$, and so
      \[   \bigl((\Delta_G\circ \Pi_2^X)^{\circ N}\bigr){}_\ast=0\colon\ \ \ A^2_{(2)}(X)\ \to\ A^2(X)\ .\]
      Corollary \ref{commut} (combined with the fact that $\Delta_G$ and $\Pi_2^X$ are idempotents) implies that 
            \[  \bigl((\Delta_G\circ \Pi_2^X)^{\circ N}\bigr){}_\ast=   (\Delta_G\circ \Pi_2^X){}_\ast\colon\ \ \ A^i(X)\ \to\ A^i(X)\ ,\]
            and so we find that
            \[    \bigl(\Delta_G\circ \Pi_2^X\bigr){}_\ast= (\Delta_G)_\ast=  0\colon\ \ \ A^2_{(2)}(X)\ \to\ A^2(X)\ .\]     
        This proves equality (\ref{22}).    
        
   The argument for $A^4_{(2)}(X)$ is similar: the correspondence $\Gamma$ being homologically trivial, its transpose
   \[ {}^t \Gamma= \Pi_6^X\circ \Delta_G -\gamma^{\prime\prime}\ \ \ \in A^4(X\times X) \]
   is also homologically trivial (where $\gamma^{\prime\prime}$ is supported on $D\times D$). Using nilpotence and lemma \ref{commut}, this implies (just as above) that
   \begin{equation}\label{42}   ( \Pi_6^X\circ \Delta_G){}_\ast=    \bigl(\Delta_G\circ \Pi_6^X\bigr){}_\ast= (\Delta_G)_\ast=  0\colon\ \ \ A^4_{(2)}(X)\ \to\ A^4(X)\ .\end{equation}
        
  In the final step of the proof, it remains to consider the action on $A^4_{(4)}(X)$. 
  Ideally, one would like to use Vial's projector $\pi^X_{4,0}$ of \cite{V4} (mentioned in the proof of theorem \ref{pi20}). Unfortunately, this approach runs into problems (cf. remark \ref{refined}).
   We therefore proceed somewhat differently: to establish the statement for $A^4_{(4)}(X)$, we use the following proposition:
   
  \begin{proposition}\label{44} Notation as above. One has
    \[ \Delta_G\circ \Pi_4^X - R =0\ \ \ \hbox{in}\ H^8(X\times X)\ ,\]
    where $R\in A^4(X\times X)$ is a correspondence with the property that
      \[  R_\ast=0\colon\ \ \ A^4(X)\ \to\ A^4(X)\ .\]
    \end{proposition}
    
    Obviously, this proposition clinches the proof: using the nilpotence theorem, one sees that there exists $N\in\NN$ such that
    \[ \bigl(   \Delta_G\circ \Pi_4^X + R \bigr){}^{\circ N}=0\ \ \ \hbox{in}\ A^4(X\times X)\ .\]
   Developing, and applying the result to $A^4(X)$, one finds that
    \[       \bigl(  ( \Delta_G\circ \Pi_4^X)^{\circ N}\bigr){}_\ast=0\colon\ \ \ A^4(X)\ \to\ A^4(X)\ .\]
    Corollary \ref{commut} (combined with the fact that $\Delta_G$ and $\Pi_4^X$ are idempotents) implies that 
            \[  \bigl((\Delta_G\circ \Pi_4^X)^{\circ N}\bigr){}_\ast=        (\Delta_G\circ \Pi_4^X){}_\ast\colon\ \ \ A^i(X)\ \to\ A^i(X)\ \ \ \forall i\not=2\ .\]
         Therefore, we conclude that
            \[    \bigl(\Delta_G\circ \Pi_4^X\bigr){}_\ast= (\Delta_G)_\ast=  0\colon\ \ \ A^4_{(4)}(X)\ \to\ A^4(X)\ .\]   
            
  It only remains to prove proposition \ref{44}. Here, we use the fact that $X$ is of $K3^{[2]}$--type and so there is an isomorphism
    \[ H^4(X)=\hbox{Sym}^2 H^2(X)\ \]
    \cite[Proposition 3]{BD}.
   Using the truth of the standard conjectures for $X$ \cite[Theorem 1.1]{CM}, and the semi--simplicity of motives for numerical equivalence \cite[Theorem 1]{J}, this means that the map
   \[ \Delta^{sm}\colon\ \ \ h^2(X)\otimes h^2(X)\ \to\ h^4(X)\ \ \ \hbox{in}\ \MM_{\rm hom} \]
   admits a right--inverse, where $\Delta^{sm}\in A^8((X\times X)\times X)$ is as before the ``small diagonal'' (cf. definition \ref{small}).
     Let $\Psi\in A^4(X\times (X\times X))$ denote this right--inverse. 
      
      Using the splitting $\pi^X_2=\pi^X_{2,0}+\pi^X_{2,1}$ in $A^4(X\times X)$ of theorem \ref{pi20}, one obtains a splitting modulo homological equivalence of $\Pi^X_4$ in 4 components
      \[ \begin{split}  \Pi^X_4&= \Pi^X_4\circ \Delta^{sm}\circ (\pi^X_2\times\pi^X_2)\circ \Psi\circ \Pi^X_4 \\
                                          &=  \Pi^X_4\circ \Delta^{sm}\circ \bigl((\pi^X_{2,0}+\pi^X_{2,1})\times(\pi^X_{2,0}+\pi^X_{2,1})\bigr)\circ \Psi\circ \Pi^X_4 \\
                                          & = \sum_{{k,\ell\in\{0,1\}} {}}  \Pi^X_4\circ \Delta^{sm}\circ (\pi^X_{2,k}\times\pi^X_{2,\ell})\circ \Psi\circ \Pi^X_4 \\
                                          &=: \sum_{{k,\ell\in\{0,1\}} {}} \Pi^X_{4,k,\ell}\ \ \ \ \ \hbox{in}\ H^8(X\times X)\ .\\
                  \end{split}\]
        We note that (by construction) $\Pi^X_{4,0,0}$ acts as a projector on
        \[  \hbox{Sym}^2 H^2_{tr}(X)\ \ \ \subset\  \hbox{Sym}^2 H^2(X)=  H^4(X)\ .\]
        Also, we recall that $\pi^X_{2,1}$ is supported on $C\times D\subset X\times X$ (theorem \ref{pi20}), which implies that $\Pi^X_{4,k,\ell}\in A^4(X\times X)$ is supported on $X\times D$ for $(k,\ell)\not=(0,0)$.
        
        It will be convenient to consider the transpose decomposition
        \[ \Pi_4^X={}^t \Pi_4^X={}^t \Pi^X_{4,0,0}+  {}^t \Pi^X_{4,1,0}+{}^t \Pi^X_{4,0,1}+ {}^t \Pi^X_{4,1,1}\ \ \    
        \hbox{in}\ H^8(X\times X)\ \]                                              
        (where we have used that $\Pi^X_4$ is transpose--invariant, cf. theorem \ref{sv}).

    This decomposition induces in particular a decomposition
    \[ \Delta_G\circ \Pi^X_4 =  \Delta_G\circ {}^t \Pi^X_{4,0,0}+ \Delta_G\circ {}^t \Pi^X_{4,1,0}+{}^t \Delta_G\circ \Pi^X_{4,0,1}+ {}^t \Delta_G\circ \Pi^X_{4,1,1}\ \ \    
        \hbox{in}\ H^8(X\times X)\ .\]   
     The last $3$ summands in this decomposition act trivially on $A^4(X)$ (indeed, the correspondence ${}^t \Pi^X_{4,k,\ell}$ is supported on $D\times X\subset X\times X$ for $(k,\ell)\not=(0,0)$, and hence acts trivially on $A^4(X)$). These last $3$ summands will form the correspondence called $R$ in proposition \ref{44}. To prove proposition \ref{44}, it remains to establish that
     \begin{equation}\label{400}    \Delta_G\circ {}^t \Pi^X_{4,0,0} =0\ \ \ \hbox{in}\ H^8(X\times X)\ .\end{equation}
   Taking transpose, one sees this is equivalent to proving that
   \[    \Pi^X_{4,0,0}\circ \Delta_G =0\ \ \ \hbox{in}\ H^8(X\times X)\ ,\ \]
   which in turn (since applying $\sigma^\ast$ and projecting to $\hbox{Sym}^2 H^2_{tr}(X)$ commute) is equivalent to proving that
   \[ \Delta_G\circ \Pi^X_{4,0,0} =0\ \ \ \hbox{in}\ H^8(X\times X)\ .\]
   Invoking Manin's identity principle, it suffices to prove that
    \[ \sigma^\ast (c_1\cup c_2) +(\sigma^2)^\ast(c_1\cup c_2) + c_1\cup c_2=0\ \ \ \hbox{in}\ H^4(X)\ \ \ \forall c_1, c_2\in H^2_{tr}(X)\ .\]
   Thanks to the equality
    \[ c_1\cup c_2={1\over 2}\Bigl( (c_1+c_2)\cup (c_1+c_2)  - c_1\cup c_1 -c_2\cup c_2\Bigr)\ ,\] 
    it suffices to prove that
     \begin{equation}\label{c^2} \sigma^\ast (c\cup c) +(\sigma^2)^\ast(c\cup c) + c\cup c=0\ \ \ \hbox{in}\ H^4(X)\ \ \ \forall c\in H^2_{tr}(X)\ .\end{equation}
     
     We now make the following claim:
     
     \begin{claim}\label{claim} Set--up as above. Let $c\in H^2_{tr}(X)$. Then
       \[ (\sigma^\ast)(c) \cup (\sigma^2)^\ast(c) =c\cup c\ \ \ \hbox{in}\ H^4(X).\]   
       \end{claim}
       
     It is readily checked that claim \ref{claim} implies equality (\ref{c^2}) (and hence equality (\ref{400}) and hence also proposition \ref{44}): We have  
     \[ \begin{split}  \sigma^\ast (c\cup c) +(\sigma^2)^\ast(c\cup c) + c\cup c &= \bigl( \sigma^\ast(c) +(\sigma^2)^\ast(c)\bigr)^{\cup 2} -2 \sigma^\ast(c)\cup
     (\sigma^2)^\ast(c) + c\cup c\\
          &= 2 c\cup c - 2  \sigma^\ast(c)\cup
     (\sigma^2)^\ast(c)\\
          &= 0\ \ \ \ \hbox{in}\    H^4(X)\ ,\\
          \end{split}\]
          proving equality (\ref{c^2}).
          (Here, the second equality is because $\sigma^\ast(c)+(\sigma^2)^\ast(c)=-c$, and the third equality is the claim.)
          
     Let us now prove claim \ref{claim}. The point is that the subgroup
     \[  H:= \bigl\{  c\in H^2_{}(X)\ \vert\ (\sigma^\ast)(c) \cup (\sigma^2)^\ast(c) =c\cup c\  \hbox{in}\ H^4(X)\bigr\} \ \ \ \subset\ H^2_{}(X)\ , \]
together with its complexification $H_\C$, defines a sub--Hodge structure of $H^2(X)$. Let $\omega\in H^{2,0}(X)$ be a generator. Then $\omega$ is in $H_\C$ (since $\sigma^\ast\omega=\nu\cdot\omega$, with $\nu^3=1$, $\nu$ primitive). But $H^2_{tr}(X)\subset H^2(X)$ is the smallest sub--Hodge structure containing $\omega$, and so we must have 
  \[ H^2_{tr}(X)\ \subset\ H\ ,\]
  which proves claim \ref{claim}.
         \end{proof}

 \begin{remark}\label{refined} To prove the statement for $A^4_{(4)}(X)$ in the final step of the above proof,
 it would be natural to try and use Vial's projector $\pi^X_{4,0}$ of \cite[Theorems 1 and 2]{V4} (mentioned in the proof of theorem \ref{pi20}).
 However, this approach is difficult to put into practice:
  the problem is that it seems impossible to prove that 
    \[ \Delta_G\circ \pi^X_{4,0}=0\ \ \ \hbox{in}\ H^8(X\times X)\ ,\] 
    short of knowing that (1) $H^4(X)\cap F^1 =N^1 H^4(X)$, and (2)  $N^1 H^4(X)=\wt{N}^1 H^4(X)$, where $N^\ast$ is the usual coniveau filtration and $\wt{N}^\ast$ is Vial's niveau filtration. Both (1) and (2) seem difficult.
\end{remark}

\section{Some corollaries}    

\begin{corollary}\label{triv} Let $X$ and $\sigma$ be as in theorem \ref{main}. Let $Z:=X/<\sigma>$ be the quotient. Then
  \[ A^4(Z)\cong \QQ\ .\]
 \end{corollary}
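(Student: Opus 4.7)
The plan is to combine theorem \ref{main} with the motivic description of the quotient variety $Z$, so the corollary becomes a bookkeeping exercise.

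First I would invoke the remark following proposition \ref{quot}: for the quotient $Z=X/G$ with $G=\langle\sigma\rangle$, there is an isomorphism of Chow motives $h(Z)\cong (X,\Delta_G,0)$ in $\MM_{\rm rat}$, where $\Delta_G={1\over 3}(\Delta_X+\Gamma_\sigma+\Gamma_{\sigma^2})$. Taking codimension $4$ Chow groups, this reads
\[ A^4(Z) = (\Delta_G)_\ast A^4(X) = \tfrac{1}{3}(\ide+\sigma+\sigma^2)_\ast A^4(X)\ .\]

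Next I would split $A^4(X)$ via the degree map. Since $X$ is smooth, connected and $4$--dimensional, the cycle class map fits into a short exact sequence
\[ 0\to A^4_{hom}(X)\to A^4(X)\xrightarrow{\degr}\QQ\to 0\ ,\]
with $H^8(X,\QQ)\cong\QQ$ generated by the class of a point. Theorem \ref{main} kills the first summand: $(\ide+\sigma+\sigma^2)_\ast A^4_{hom}(X)=0$. On the quotient $\QQ$, each $\sigma^j$ acts as the identity because automorphisms preserve the degree of a $0$--cycle, so $(\ide+\sigma+\sigma^2)_\ast$ acts as multiplication by $3$.

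Combining these two observations, the image of $(\ide+\sigma+\sigma^2)_\ast$ on $A^4(X)$ is precisely the one--dimensional $\QQ$--vector space spanned by the class of a point; rescaling by $1/3$ yields $A^4(Z)\cong\QQ$. The whole argument is a direct consequence of theorem \ref{main} together with the motivic formalism recalled in section \ref{ssquot}, and no real obstacle arises—the only point worth stressing is that $\sigma$ acts trivially on $H^8(X,\QQ)$, which is immediate.
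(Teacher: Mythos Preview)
Your argument is correct and, in fact, slightly more direct than the paper's. Both approaches identify $A^4(Z)$ with the $\sigma$--invariant part $A^4(X)^\sigma=(\Delta_G)_\ast A^4(X)$, and both use theorem \ref{main} to kill the homologically trivial contribution. The difference lies in how the remaining one--dimensional piece is handled. The paper passes through the Fourier decomposition: it invokes proposition \ref{compat} (that $\sigma$ preserves each $A^4_{(j)}(X)$) to deduce $A^4(X)^\sigma\subset A^4_{(0)}(X)$, and then exhibits a $\sigma$--invariant ample divisor $L$ whose top power generates $A^4_{(0)}(X)$. You instead go straight through the degree map $A^4(X)\to\QQ$, on which every automorphism acts trivially. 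Your route avoids the Fourier machinery entirely and needs only the ``in particular'' statement of theorem \ref{main}; the paper's route has the minor advantage of identifying the invariant generator explicitly as $L^4$, but at the cost of an extra appeal to proposition \ref{compat}.
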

 
 \begin{proof} We have a natural isomorphism $A^4(Z)\cong A^4(X)^\sigma$. But theorem \ref{main} (combined with the fact that $\sigma^\ast A^4_{(j)}(X)\subset A^4_{(j)}(X)$ for all $j$, cf. proposition \ref{compat}) implies that
   \[ A^4(X)^\sigma\ \ \ \subset\ A^4_{(0)}(X)\ .\]
   Since there exists a $\sigma$--invariant ample divisor $L\in A^1(X)$, and $L^4$ generates the $1$--dimensional $\QQ$--vector space $A^4_{(0)}(X)$, there is equality
   \[ A^4(X)^\sigma= A^4_{(0)}(X)\ .\] 
 \end{proof}

 \begin{corollary}\label{ghc} Let $X$ and $\sigma$ be as in theorem \ref{main}. Then the invariant part of cohomology
   \[ H^4(X)^\sigma  \ \subset \ H^4(X) \]
   is supported on a divisor.
   \end{corollary}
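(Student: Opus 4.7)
The plan is to combine the cohomological vanishing established in the proof of Theorem \ref{main} with the isomorphism $H^4(X) = \hbox{Sym}^2 H^2(X)$ valid for $K3^{[2]}$-type hyperk\"ahler fourfolds. First I would observe that the correspondence $\Delta_G := \frac{1}{3}(\Delta_X + \Gamma_\sigma + \Gamma_{\sigma^2})$ acts on cohomology as the projector $\frac{1}{3}(\ide + \sigma^\ast + (\sigma^2)^\ast)$ onto $H^\ast(X)^\sigma$; in particular $H^4(X)^\sigma = (\Delta_G)_\ast H^4(X)$, so the task reduces to showing that $(\Delta_G)_\ast H^4(X)$ is supported on a divisor.

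Next, I would use $H^4(X) = \hbox{Sym}^2 H^2(X)$ together with the orthogonal decomposition $H^2(X) = NS(X) \oplus H^2_{tr}(X)$ with respect to the Beauville--Bogomolov form to obtain the splitting
\[
H^4(X) \;=\; \hbox{Sym}^2 H^2_{tr}(X) \;\oplus\; M, \qquad M := \bigl(NS(X) \otimes H^2_{tr}(X)\bigr) \oplus \hbox{Sym}^2 NS(X).
\]
Since $\sigma$ preserves both $NS(X)$ and $H^2_{tr}(X)$ (being biregular and respecting the Beauville--Bogomolov form), this decomposition is $\sigma$-stable. Moreover, each class in $M$ is a linear combination of cup products of the form $[D] \cup \beta = \iota_\ast \iota^\ast \beta$, with $D \subset X$ a divisor and $\iota : \wt{D} \to X$ a resolution; hence $M$ is supported on a divisor in $X$.

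Finally, I would invoke the cohomological input already established in the proof of Theorem \ref{main}: namely the vanishing $(\ide + \sigma^\ast + (\sigma^2)^\ast)(\hbox{Sym}^2 H^2_{tr}(X)) = 0$, which is precisely equality (\ref{c^2}) derived from Claim \ref{claim}. This yields $\bigl(\hbox{Sym}^2 H^2_{tr}(X)\bigr)^\sigma = 0$. Combined with the $\sigma$-invariant splitting above, this gives $H^4(X)^\sigma = 0 \oplus M^\sigma \subset M$, which is supported on a divisor. The substantive cohomological work having been done inside the proof of Theorem \ref{main}, the remaining steps here are purely formal and I do not anticipate any essential obstacle.
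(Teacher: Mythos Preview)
Your proof is correct, but it follows a genuinely different route from the paper's.

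The paper derives the corollary from the full Chow--theoretic statement of Theorem~\ref{main} via a Bloch--Srinivas decomposition of the diagonal: working over a finitely generated field $k$, the result $A^4(X_{k(X)})^\sigma\cong\QQ$ forces a rational equivalence $\Delta_G = x\times X + \gamma$ with $\gamma$ supported on $X\times D$; letting this act on $H^4$ gives the conclusion. Your argument bypasses the Chow groups entirely and works purely in cohomology, using only the intermediate equality~(\ref{c^2}) (equivalently Claim~\ref{claim}) together with the structural isomorphism $H^4(X)\cong\hbox{Sym}^2 H^2(X)$ to split off the divisorially supported piece $M$ and kill the $\hbox{Sym}^2 H^2_{tr}$ contribution.

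Each approach has its merits. Yours is more elementary for this particular $X$: it avoids spreading out, function--field Chow groups, and even the nilpotence/finite--dimensionality machinery, since equality~(\ref{c^2}) is established by a short Hodge--theoretic argument. The paper's approach, on the other hand, is a general mechanism: once one knows $A^n_{hom}(Z)=0$ for a quotient $Z$, the Bloch--Srinivas argument automatically yields coniveau statements in cohomology, with no need for a description of $H^4$ as a symmetric square. So your argument is shorter here but tied to the $K3^{[2]}$--type structure, whereas the paper's is portable to other situations where the Chow vanishing is known.
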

   
   \begin{proof} This follows from theorem \ref{main} by applying the Bloch--Srinivas ``decomposition of the diagonal'' argument \cite{BS}.
   For the benefit of readers not familiar with \cite{BS}, we briefly resume this argument. 
   
   Let $k\subset\C$ be a subfield such that $X$ and $\Delta_G$ are defined over $k$, and such that $k$ is finitely generated over $\QQ$. Let $k(X)$ denote the function field of $X_k$. Since there is an embedding $k(X)\subset\C$, there is a natural homomorphism
   \[ A^\ast(X_{k(X)})\ \to\ A^\ast(X_\C) \]
   that is injective \cite[Appendix to Lecture 1]{B}. In particular, there is an injective homomorphism
   \[ A^4(X_{k(X)})^\sigma\ \hookrightarrow\ A^4(X_\C)^\sigma\ .\]
   As the right--hand side has dimension $1$ (theorem \ref{main}), it follows that also
   \[ \dim A^4(X_{k(X)})^\sigma =1\ .\]
   We now consider the image of $\Delta_G\in A^4(X_k\times X_k)^{\sigma\times\sigma}$ under the restriction homomorphism
   \[ A^4(X_k\times X_k)^{\sigma\times\sigma}\ \to\ \varinjlim A^4(X_k\times U)^\sigma\cong   A^4(X_{k(X)})^\sigma =\QQ\ \]
   (here the limit is over Zariski opens $U\subset X_k$, and the isomorphism follows from \cite[Appendix to Lecture 1]{B}).    
   This gives a decomposition
   \[ \Delta_G= x\times X + \gamma\ \ \ \hbox{in}\ A^4(X_k\times X_k)\ ,\]
   where $\gamma$ is supported on $X\times D$ for some divisor $D\subset X$.
   Considering this decomposition for $X=X_\C$, and looking at the action of correspondences on cohomology, we find that
   \[ H^4(X)^\sigma = (\Delta_G)_\ast H^4(X) =  \gamma_\ast H^4(X)\ ,\]
   and thus $H^4(X)^\sigma$ is supported on the divisor $D$. 
       \end{proof}

\vskip1cm
\begin{nonumberingt} Thanks to all participants of the Strasbourg 2014/2015 ``groupe de travail'' based on the monograph \cite{Vo} for a very stimulating atmosphere. Thanks to the referee for genuinely helpful comments.
Many thanks to Kai and Len who accepted to be junior members of the Schiltigheim Math Research Institute.
\end{nonumberingt}

\vskip1cm

\end{document}